\def\N {{\mathbb{N}}}
\def\R {{\mathbb{R}}}
\def\p {{\bf{p}}}
\def\ra{{\rightarrow}}
\def\ba{{\backslash}}
\newcommand{\ds}{\displaystyle}
\newtheorem{theorem}{Theorem}[section]
\newtheorem{lemma}[theorem]{Lemma}
\newtheorem{proposition}[theorem]{Proposition}
\newtheorem{corollary}[theorem]{Corollary}
\theoremstyle{definition}
\newtheorem{definition}[theorem]{Definition}
\newtheorem{example}[theorem]{Example}
\newtheorem{question}[theorem]{Question}
\newtheorem{remark}[theorem]{Remark}
\newtheorem{remark on notation}[theorem]{Remark on Notation}
\newtheorem{notation}[theorem]{Notation}
\numberwithin{equation}{section}
\begin{document}
\title[Low Bound For the Rank of Rigidity Matrix of 4-valent graphs]{lower bound for the rank of rigidity matrix of 4-valent graphs under various connectivity assumptions}

\author{Shisen Luo}

\address{Department of Mathematics, Cornell University,
Ithaca, NY 14853-4201, USA}

\email{{\tt ssluo@math.cornell.edu}}

\subjclass[2010]{Primary: 52C25 Secondary: 05C50} 
\keywords{generic rigidity, regular graph}

\begin{abstract}
In this paper we study the rank of planar rigidity matrix of 4-valent graphs, both in case of generic realizations and configurations in general position, under various connectivity assumptions on the graphs. For each case considered, we prove a lower bound and provide an example which shows the order of the bound we proved is sharp. This work is closed related to work in \cite{Luo:Rigidity} and answers some questions raised there.
\end{abstract}

\date{\today}
\maketitle \tableofcontents

\section{\bf Introduction}\label{sec:introduction}
Let $G=(V, E)$ be a connected 4-valent graph and $\p: V\ra \R^2$ a planar realization(the words realization and configuration will be used interchangeably).  The graph $G$ is always assumed to be finite and simple. The rigidity matrix, which we denote by $R(\p)$, is a matrix of size $|E|\times 2|V|$. The rank of $R(\p)$, which we denote by $r(G(\p))$, can be taken as the definition of the rank of the {\it infinitesimal rigidity matroid} of the framework $G(\p)$.  When $\p$ is a {\it generic realization}, one can show $r(G(\p))$ is independent of $\p$ as long as it is generic, this can be taken as the definition of the rank of the {\it generic rigidity matroid} of $G$ and we will write $r(G)$ for it. For more details on the definitions and terminologies in rigidity theory, we refer the readers to \cite{GSS:Combinatorial Rigidity}.

As we pointed out in \cite{Luo:Rigidity}, the question of finding the lower bound of the rigidity matrix is closed related to a question in symplectic geometry. We will not repeat it here but refer the readers to \cite{Luo:Rigidity} and references therein.

The main results in the paper are summarized as the following four theorems. 

\begin{theorem}\label{thm:generic}
Assume $G=(V,E)$ is a connected 4-valent graph, then \[r(G)\geq \dfrac{8}{5}|V|-1.\]
\end{theorem}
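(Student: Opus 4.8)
The plan is to pass from the rank to its co-rank and to bound the total \emph{redundancy} of the rigid-component decomposition, exploiting $4$-regularity through a single local identity. Since $G$ is $4$-regular, $|E|=2|V|$. Let $R_1,\dots,R_c$ be the rigid components of $G$ (its maximal rigid subgraphs), with $n_i=|V(R_i)|$ and $e_i=|E(R_i)|$. The edges of $G$ partition among the $R_i$, and by the standard rank formula for the generic $2$-dimensional rigidity matroid (see \cite{GSS:Combinatorial Rigidity}) one has $r(G)=\sum_{i=1}^{c}(2n_i-3)$. Writing $\rho_i:=e_i-(2n_i-3)\ge 0$ for the redundancy of $R_i$ and $\rho:=\sum_i\rho_i$, the identity $\sum_i e_i=2|V|$ gives $r(G)=2|V|-\rho$. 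Thus Theorem~\ref{thm:generic} is equivalent to the upper bound $\rho\le \tfrac{2}{5}|V|+1$, which is what I would establish.

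The crucial input is a local consequence of $4$-regularity. For each component set $s_i:=\sum_{v\in V(R_i)}(4-\deg_{R_i}(v))=4n_i-2e_i$, the number of edge-slots at vertices of $R_i$ that are used by edges lying outside $R_i$. Then $e_i=2n_i-\tfrac{s_i}{2}$, so $\rho_i=3-\tfrac{s_i}{2}$; since $R_i$ is rigid, $\rho_i\ge 0$ and hence $s_i\in\{0,2,4,6\}$ with $\rho_i=3,2,1,0$ respectively. Letting $a_j$ count the components with $s_i=2j$ ($j=0,1,2,3$), we get $\rho=3a_0+2a_1+a_2$ and $c=a_0+a_1+a_2+a_3$. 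Writing $m:=\sum_v(d_v-1)$, where $d_v$ is the number of components through $v$, the double count $\sum_i s_i=\sum_v 4(d_v-1)=4m$ yields $m=\tfrac12(a_1+2a_2+3a_3)$. If $c=1$ then $G$ is rigid and $r(G)=2|V|-3\ge\tfrac{8}{5}|V|-1$ because $|V|\ge 5$; and if $c\ge 2$ then no component can have $s_i=0$ (such a component would be a connected component of $G$), so $a_0=0$.

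Two further constraints pin down the constant. First, connectivity of $G$: the bipartite incidence structure between components and vertices has $|V|+m$ edges on $c+|V|$ nodes and is connected, forcing $m\ge c-1$; with $a_0=0$ this simplifies to $a_3\ge a_1-2$. Second, simplicity of $G$: a component with $\rho_i=2$ has $e_i=2n_i-1$ and so, being simple, needs $n_i\ge 5$, while $\rho_i=1$ forces $e_i=2n_i-2$ and $n_i\ge 4$. Since $\sum_i n_i=|V|+m$, these lower bounds give
\[ |V|=\sum_i n_i-m\ \ge\ 5a_1+4a_2+2a_3-\tfrac12(a_1+2a_2+3a_3)=\tfrac12(9a_1+6a_2+a_3), \]
that is, $2|V|\ge 9a_1+6a_2+a_3$.

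It then remains to combine these: from $2|V|\ge 9a_1+6a_2+a_3$ and $a_3\ge a_1-2$ (hence $a_1\le a_2+a_3+2$) one gets $5\rho=10a_1+5a_2\le 9a_1+6a_2+a_3+2\le 2|V|+2$, so $\rho\le\tfrac25|V|+\tfrac25\le\tfrac25|V|+1$, which proves the theorem. I expect the main obstacle to be the foundational step that the rigid-component cover actually computes the rank, i.e.\ that $r(G)=\sum_i(2n_i-3)$, so that the co-rank $2|V|-r(G)$ equals the combinatorial redundancy $\rho$; once this is in place the argument is driven entirely by the identity $\rho_i=3-\tfrac{s_i}{2}$ together with the two sharp minimal sizes $n_i\ge 5$ and $n_i\ge 4$, which are exactly what produce the coefficient $\tfrac85$. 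The same bookkeeping identifies the extremal shape (copies of $K_5$ minus an edge, linked through single edges), matching the advertised sharpness.
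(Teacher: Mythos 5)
Your proposal is correct, but it reaches Theorem~\ref{thm:generic} by a genuinely different route from the paper. The paper obtains this theorem in one line from Theorem~\ref{thm:general} via $r(G)\geq r(G(\p))$; the actual work there is an induction built on the trimming process (deleting degree-$\leq 2$ vertices and $1$- or $2$-edge cuts, which preserve the number of stresses by Lemmas~\ref{lemma:delete}, \ref{lemma:disconnect1} and \ref{lemma:disconnect2}), reducing to graphs of type $A_4$ and invoking the bound $s_{\p}(G)\leq n_4(G)/3+c(G)$ of Proposition~\ref{prop:s for A_4}. You instead decompose $G$ into its rigid components, rewrite the co-rank as the total redundancy $\rho$, and finish by pure counting; I have checked the identities $\rho_i=3-s_i/2$ and $\sum_i s_i=4m$, the connectivity bound $m\geq c-1$, the simplicity bounds $n_i\geq 5,4,2$ for $\rho_i=2,1,0$, and the final combination $5\rho=10a_1+5a_2\leq 9a_1+6a_2+a_3+2\leq 2|V|+2$, and they all hold (the rigid case $c=1$ is also handled correctly since a simple $4$-regular graph has $|V|\geq 5$). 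What your route buys: it is non-inductive, it isolates exactly where $4$-regularity, simplicity and connectivity each enter, it gives the slightly sharper bound $r(G)\geq\tfrac{8}{5}|V|-\tfrac{2}{5}$ when $G$ is not rigid, and it makes the extremal structure (copies of $K_5$ minus an edge linked in a cycle) transparent. What it costs: the rank formula $r(G)=\sum_i(2n_i-3)$ over rigid components is an intrinsically \emph{generic} fact --- it rests on Laman's theorem together with the observation that every circuit of the rigidity matroid satisfies $|C|=2|V(C)|-2$ and hence spans a rigid subgraph --- so your argument cannot be pushed to configurations in general position, whereas the paper's trimming lemmas are valid in general position and therefore deliver the stronger Theorems~\ref{thm:general} and \ref{thm:connected general} by the same method. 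The one point you must still nail down is that rank formula itself, which you correctly flag as the foundational step: it is standard but not trivial, so either prove it (take a base of each component and use the circuit observation above to see the union is independent) or cite a precise statement, rather than attributing it generically to \cite{GSS:Combinatorial Rigidity}.
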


\begin{theorem}\label{thm:general}
Assume $G=(V,E)$ is a connected 4-valent graph and $\p: V\ra \R^2$ is a configuration in general position, i.e., the image under $\p$ of any three points in $V$ do not lie on the same line, then $$r(G(\p))\geq \dfrac{8}{5}|V|-1.$$
\end{theorem}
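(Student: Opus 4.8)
The plan is to re-run the argument of Theorem \ref{thm:generic}, but tracking the ranks of explicit minors of $R(\p)$ rather than generic ranks, isolating precisely where genericity is used so that it can be replaced by the weaker hypothesis that no three points of $\p(V)$ are collinear. One should note at the outset that the generic result does \emph{not} formally imply the general-position one: for every configuration $r(G(\p)) \le r(G)$, since the generic rank is the maximum over all configurations, and this inequality runs opposite to what we want. The content of Theorem \ref{thm:general} is exactly that the rank does not drop below $\frac{8}{5}|V| - 1$ on the non-generic locus of general-position configurations, and so it must be proved by a constructive certificate of rank that survives off the generic point.

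First I would reduce the global bound to a statement about the rank contributions of the small subgraphs (the ``gadgets'') that the proof of Theorem \ref{thm:generic} uses to certify the generic rank: the degree-two vertex additions together with whatever fixed rigid piece produces the sharp coefficient $8/5$. The complete graph $K_5$ is the natural unit here, since $\frac{8}{5}\cdot 5 - 1 = 7 = 2\cdot 5 - 3$ shows a $5$-vertex rigid block to be exactly tight. The key local lemma is that each such gadget attains its full generic rank at every general-position configuration. For a degree-two vertex addition this is elementary: the two new rows, with entries $\p(v) - \p(a)$ and $\p(v) - \p(b)$, are linearly independent precisely when $\p(v), \p(a), \p(b)$ are not collinear, which general position guarantees. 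For the fixed blocks one checks infinitesimal rigidity directly under the no-three-collinear hypothesis; for instance $K_5$ is rigid because a rigid $K_4$ core (rank $5$ in general position) has a fifth vertex attached by four edges, which pin it with rank $2$ as soon as two of the directions $\p(5)-\p(i)$ are independent, again forced by general position.

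Next I would assemble these local statements into the global bound by the same induction or decomposition used for Theorem \ref{thm:generic}, now phrased as: at each step the rows newly adjoined to $R(\p)$ are linearly independent of the row space already spanned. This independence is the general-position analogue of the matroid independence invoked generically, and it is what makes the rank contributions additive, yielding a set of at least $\frac{8}{5}|V|-1$ independent rows. I would verify each independence step by the same gluing that underlies the generic argument, confirming that the only configurations that could force a dependency are those with collinear triples, which are excluded.

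The main obstacle is precisely this additivity in general position. Generically, the fact that the ranks of the pieces combine without loss is automatic from the structure of the generic rigidity matroid and from the rank-preserving nature of the extensions at a generic point; off the generic locus one must rule out accidental linear dependencies among the rows of $R(\p)$ that a special-but-general-position configuration might create. The crux is therefore to show that, for the gadget responsible for the $\frac{8}{5}$ coefficient and for the way consecutive gadgets overlap, collinearity of three points is the \emph{sole} obstruction to the expected rank, i.e.\ that the relevant maximal minors of $R(\p)$ vanish only when three image points become collinear. Establishing this for the sharp block, and checking that the overlaps between successive blocks introduce no further degeneracy, is where the real work lies.
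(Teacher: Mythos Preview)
Your proposal is a strategy outline, not a proof, and the central difficulty you yourself flag is never discharged. You correctly identify that the issue is whether the rank contributions of the pieces add up off the generic locus, but you then write that ``establishing this \ldots\ is where the real work lies'' and stop. There is no decomposition given of an arbitrary connected $4$-valent graph into the $K_5$-type gadgets you have in mind, and no argument that the overlaps between consecutive gadgets introduce no hidden dependencies in general position. A generic $4$-valent graph need not contain any $K_5$ at all, so the $K_5$ block cannot be the unit of a structural decomposition; it is only the unit of the \emph{extremal example}. Your plan also rests on ``the same induction or decomposition used for Theorem~\ref{thm:generic}'', but in this paper Theorem~\ref{thm:generic} is obtained as a corollary of Theorem~\ref{thm:general}, so there is no independent generic argument to re-run.

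The paper's route is entirely different and avoids your obstacle. It works with the stress count $s_{\p}(G)=|E|-r(G(\p))$ rather than with rank directly, and the whole argument is a \emph{deletion} induction rather than a build-up. The key lemmas (deleting a vertex of degree $\le 2$, deleting a one- or two-edge cut) preserve $s_{\p}$ for any configuration in general position; this is where ``no three collinear'' enters, and it enters only there. One first proves the $4$-edge-connected case (Theorem~\ref{thm:connected general}) via Proposition~\ref{prop:s for A_4}, an inductive bound $s_{\p}(G)\le n_4(G)/3+c(G)$ for ``type $A_4$'' graphs obtained by deleting a degree-$3$ vertex and trimming. For Theorem~\ref{thm:general} one then observes that a $4$-valent graph which is not $4$-edge-connected has a $2$-edge cut (by parity), trims it to a disjoint union of $K_4$'s and type-$A_4$ components on $\ge 5$ vertices, bounds the stress of each piece, and sums. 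No gadget assembly or minor computation is needed, and the only place general position is used is in the Deleting Lemma, where it guarantees that the two rows attached to a degree-$2$ vertex are independent.
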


\begin{theorem}\label{thm:connected generic}
Assume $G=(V,E)$ is a connected 4-valent graph which is also $4$-edge-connected (see Definition~\ref{def:edge-connected}), then
\[r(G)\geq \dfrac{7|V|-7}{4}.\]
\end{theorem}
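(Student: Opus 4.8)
The plan is to invoke the Lov\'asz--Yemini cover formula for the rank of the generic $2$-dimensional rigidity matroid, which expresses $r(G)$ as the minimum over all partitions $\{E_1,\dots,E_k\}$ of $E$ of the quantity $\sum_{i=1}^{k}(2|V_i|-3)$, where $V_i$ is the set of endpoints of the edges in $E_i$ (this is the same tool underlying Theorem~\ref{thm:generic}; see \cite{GSS:Combinatorial Rigidity}). It therefore suffices to prove that \emph{every} edge-partition satisfies $\sum_i (2|V_i|-3)\geq \tfrac{7|V|-7}{4}$. Writing $n_i=|V_i|$ and $m_i=|E_i|$, I would first record $\sum_i m_i = 2|V|$ (from $4$-valence) and, for every part with $V_i\neq V$, the bound $m_i\leq 2n_i-2$. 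The latter is where $4$-edge-connectivity enters: the edges of $G$ leaving $V_i$ form a cut, so by Definition~\ref{def:edge-connected} there are at least four of them, and since $\sum_{v\in V_i}\deg_G(v)=4n_i$ counts interior edges twice and boundary edges once, the number of interior edges is at most $2n_i-2$. (If some part has $V_i=V$ the target is immediate, since that part alone contributes $2|V|-3\geq \tfrac{7|V|-7}{4}$ for $|V|\geq 5$, and a $4$-regular simple graph has $|V|\geq 5$.)

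Next I would reduce the target to a single deficiency count. Setting $g_i := 8(2n_i-3)-7m_i=16n_i-24-7m_i$, the bound is equivalent to $\sum_i g_i\geq -14$, because $\tfrac{7}{8}m_i+\tfrac18 g_i=2n_i-3$ gives $\sum_i(2n_i-3)=\tfrac78\sum_i m_i+\tfrac18\sum_i g_i=\tfrac{7|V|}{4}+\tfrac18\sum_i g_i$. A short case check on $n_i$, using $m_i\leq 2n_i-2$ together with the sharper simple-graph bounds $m_i\leq 1$ for $n_i=2$ and $m_i\leq 3$ for $n_i=3$, shows $g_i\geq 0$ for every part \emph{except} when $E_i$ induces a copy of $K_4$ (that is, $n_i=4$, $m_i=6$), in which case $g_i=-2$. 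Thus the entire difficulty concentrates on the $K_4$-parts, and the crux is to show that their accumulated deficit is bounded unless it is compensated.

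The compensating surplus comes from sharing. Letting $c_v$ denote the number of parts containing $v$, edge-disjointness and $\deg_G\equiv 4$ force $c_v\leq 4$, and a direct rewriting gives $\sum_i g_i = 2|V|+16s-24k$, where $k$ is the number of parts and $s=\sum_v(c_v-1)$ is the total overlap. A $K_4$-part has interior degree $3$ at each of its four vertices, so each such vertex carries exactly one edge into a different part and hence has $c_v\geq 2$; moreover no vertex can lie in two $K_4$-parts (that would need six edges at $v$), so the $K_4$-parts are vertex-disjoint and each one forces at least four units of overlap. The plan is then to run a discharging argument---equivalently, to bound $k$ from above---that plays this forced overlap against the number of parts, by separating the single-edge parts and the $K_4$-parts from the rest in the incidence identity $\sum_i n_i=|V|+s$ and feeding in the edge budget $\sum_i m_i=2|V|$. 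Done carefully this yields $\sum_i g_i\geq -14$, with equality approached by the ``ring of $K_4$'s'' (the $K_4$-blocks joined cyclically into a $4$-regular, $4$-edge-connected graph), which is exactly the configuration witnessing sharpness of the bound.

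The main obstacle is precisely this last step. The naive per-part estimate $g_i\geq -2$ for a $K_4$-part only gives $\sum_i g_i\geq -2q$, with $q$ the number of $K_4$-parts, and this is hopelessly weak once $q$ grows, since it degrades linearly in $|V|$. Making the compensation quantitative---showing that the overlap forced by each $K_4$-part is charged against genuinely distinct surplus, and that the single-edge parts cannot proliferate beyond what that overlap pays for---is the delicate part, and it is the only place where $4$-edge-connectivity is used beyond the per-part inequality $m_i\leq 2n_i-2$. I expect the cleanest route is to pass to a minimizing partition, where one may further assume any two parts meet in at most one vertex, so that the overlap structure is tree-like enough to control $k$ and close the estimate.
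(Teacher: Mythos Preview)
Your route via the Lov\'asz--Yemini partition formula is entirely different from the paper's, and the reduction you carry out is correct: the target is equivalent to $\sum_i g_i\ge -14$ with $g_i=16n_i-24-7m_i$, the per-part case analysis is right, and the $K_4$-parts are indeed the sole source of deficit (each contributing exactly $-2$). But the proposal has a genuine gap precisely where you flag it. The identity $\sum_i g_i=2|V|+16s-24k$ together with $\sum_i n_i=|V|+s$ does not by itself control $k$, and passing to a minimizing partition (so that any two parts share at most one vertex) is a helpful normalization but leaves the main combinatorics undone. You have not shown that the overlap forced by the $K_4$-parts actually converts into enough surplus in the remaining $g_i$'s to cover $2q$ once $q>7$; note that parts with $n_i=5$, $m_i=8$ have $g_i=0$ and can absorb dangling edges from $K_4$-parts without contributing any surplus, so ``each $K_4$ forces four units of overlap'' does not immediately cash out. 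The sentence ``done carefully this yields $\sum_i g_i\ge -14$'' is exactly the lemma that is missing, and nothing in the sketch indicates how to prove it.

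The paper never touches Lov\'asz--Yemini. It works with the stress count $s(G)=|E|-r(G)$ and runs an induction on $|V|$ governed by a \emph{generic trimming process}: one repeatedly deletes degree-$\le 2$ vertices and edge-cuts of size $\le 3$ whose edges do not share a common vertex, all of which preserve $s(G)$ (the $3$-cut step, Lemma~\ref{lemma:disconnect3}, is where genericity enters and is why this bound improves on Theorem~\ref{thm:connected general}). The inductive engine is Proposition~\ref{prop:s for B_4}, which bounds $s$ for the ``type $B_4$'' graphs produced by trimming; its proof deletes a degree-$3$ vertex and, in the favorable case, replaces it by a single edge via a Henneberg move (Lemma~\ref{lemma:one extension}) so as not to lose the $+1$ from Corollary~\ref{corollary:delete}. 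Four-edge-connectivity of $G$ is used only to guarantee that every trimmed component has at least four degree-$3$ vertices, which is the hypothesis the induction needs. One then deletes a single vertex of $G$, trims, applies the proposition, and reads off $s(G)\le(|V|+7)/4$.
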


\begin{theorem}\label{thm:connected general}
Assume $G=(V,E)$ is a connected 4-valent graph which is also $4$-edge-connected and $\p: V\ra \R^2$ is a configuration in general position, then
\[r(G(\p))\geq \dfrac{5|V|-4}{3}.\] 
\end{theorem}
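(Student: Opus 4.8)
The plan is to realize $r(G(\p))$ as the maximum number of edges whose rigidity rows are linearly independent in $R(\p)$, and to construct explicitly an independent set of that size. The construction rests on a few elementary but decisive general-position facts, which I would isolate as lemmas first. The key point is that, unlike in the generic case, a subgraph satisfying the Laman count $|E'|\le 2|V'|-3$ need \emph{not} be independent at special (yet noncollinear) positions, so every independence claim must come with an explicit geometric certificate valid for \emph{all} general-position $\p$. Three such certificates suffice. First, every cycle is independent in general position: at each degree-two vertex the two incident edge directions are linearly independent because no three points are collinear, so the per-vertex block of any linear dependence forces the two incident coefficients to vanish, and running over the cycle kills all coefficients. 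Second, the same block argument shows that a Henneberg type-I move (attaching a new degree-two vertex to any two existing ones) preserves independence in general position. Third, gluing two independent subgraphs along at most one shared vertex, or joining them by a single bridge, preserves independence; here a ``private-column'' argument (together with the fact that each piece carries a translational flex) shows the new rows cannot lie in the span.

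Next I would exploit the $4$-regular, $4$-edge-connected structure. By Petersen's theorem $G$ decomposes into two edge-disjoint $2$-factors $F_1,F_2$, each a disjoint union of cycles spanning $V$. Since $F_1$ is a disjoint union of cycles it is independent in general position with $r(F_1)=|V|$, and $F_2$ supplies $|V|$ further edges joining these cycles. Adding those $F_2$-edges that connect distinct $F_1$-cycles in a tree pattern is always safe, as each is a bridge between two already-independent pieces; this assembles a spanning cactus and raises the rank to $|V|+(k-1)$, where $k$ is the number of cycles of $F_1$. Equivalently, starting from a spanning tree (rank $|V|-1$) the target $\tfrac{5|V|-4}{3}$ requires adding $\tfrac{2|V|-1}{3}$ further independent edges. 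The cactus step alone falls short of the constant $\tfrac53$, so I would then add a carefully chosen further set of $F_2$-edges, each creating a new fundamental cycle for which the enlarged subgraph \emph{still} admits a degree-two elimination ordering down to a single edge, hence remains independent by the Henneberg~I lemma. The $4$-edge-connectivity is used to guarantee that enough such edges exist and that their fundamental cycles can be routed so this reducibility is preserved; amortizing the gain per added edge over the cycles of $F_1$ is what produces the exact constant $\tfrac53$ and the additive $-\tfrac43$.

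The main obstacle, and the precise reason the bound is weaker than the generic bound $\tfrac{7|V|-7}{4}$ of Theorem~\ref{thm:connected generic}, is the step of adjoining an edge that creates a \emph{second} independent cycle inside a single two-connected block: this is exactly where the combinatorial Laman certificate is unavailable and a geometric one is required. My plan is to confine every addition to the class of subgraphs reducible to an edge by successive degree-two eliminations, since those are independent for every general-position realization; the heart of the proof is then the purely combinatorial claim that, in any $4$-regular $4$-edge-connected $G$, one can select $\tfrac{2|V|-1}{3}$ edges beyond a spanning tree whose successive additions never leave this class. I expect the most delicate bookkeeping to be the trade-off between short and long cycles of $F_1$—short cycles admit more safe chords but cover fewer vertices—and this same trade-off should dictate the sharp example, which I anticipate is a triangle-free $4$-regular $4$-edge-connected graph containing no independent-in-general-position subgraph denser than the $\tfrac53|V|$ rate. (An alternative route is an induction that removes a degree-four vertex via a Mader splitting-off that preserves $4$-edge-connectivity and tracks the rank change; there the delicate point is instead to keep the reduced configuration in general position, and I would fall back on it only if the direct construction stalls.)
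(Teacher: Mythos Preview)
Your three geometric certificates (cycles, Henneberg~I extensions, and one-vertex gluing/bridges) are correct for every general-position $\p$, and the Petersen $2$-factorization is a legitimate starting point. The gap is the step you yourself flag as ``the heart of the proof'': the assertion that $4$-edge-connectivity guarantees one can add $\tfrac{2|V|-1}{3}$ edges of $F_2$ beyond a spanning tree while keeping the running subgraph reducible to an edge by degree-$2$ eliminations. No mechanism is given. After the cactus step you hold $|V|+k-1$ independent edges, where $k$ is the number of $F_1$-cycles; when $k$ is small (say $F_1$ is a single Hamiltonian cycle) you still need roughly $\tfrac{2|V|}{3}$ further edges, all drawn from a prescribed second $2$-factor $F_2$, and each addition must preserve a global degree-$2$ elimination order. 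Four-edge-connectivity constrains edge-cuts of $G$, not the degree sequence of the subgraph you are assembling, so invoking it is not yet an argument. Until this combinatorial lemma is formulated precisely and proved, the proposal is a plan with its central step missing rather than a proof; it is not even clear the lemma is true as stated for arbitrary choices of $F_1,F_2$.

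For contrast, the paper avoids constructing an independent set altogether and instead bounds the co-rank $s_\p(G)=|E|-r(G(\p))$. Deleting one edge produces a $3$-edge-connected graph $G'$ with two degree-$3$ vertices (``type $A_4$'') and $s_\p(G)\le s_\p(G')+1$. The key inductive lemma is that every type-$A_4$ graph satisfies $s_\p\le n_4/3+c$: one removes a degree-$3$ vertex (costing at most one stress by Corollary~\ref{corollary:delete}), trims the result back to type $A_4$ via Lemmas~\ref{lemma:delete}--\ref{lemma:disconnect2}, and uses $3$-edge-connectivity of the original to show each surviving component has at least three fewer degree-$4$ vertices, so the $n_4/3$ term absorbs the per-component $+1$. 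This route needs only the deletion and small-cut lemmas, never a large explicit independent set, which is why it goes through uniformly for all general-position $\p$.
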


\begin{definition}\label{def:edge-connected}
A graph $G=(V, E)$ is called $k$-edge-connected for some $k\in \N$ if $G$ remains connected upon removing any $k-1$ edges. In particular, a connected graph is $1$-edge-connected.
\end{definition}

Theorem~\ref{thm:generic} was Theorem 1 in \cite{Luo:Rigidity}, here we will give an alternative proof of it by first proving Theorem~\ref{thm:connected general} and then Theorem~\ref{thm:general}. Theorem~\ref{thm:general}, Theorem~\ref{thm:connected general} and Theorem~\ref{thm:connected generic} provide answers to Question 1.9 and Question 1.10 in \cite{Luo:Rigidity} in case of 4-valent graphs. It should also be pointed out that Theorem~\ref{thm:connected general} is essentially proved in Section 5 of \cite{Luo:Betti}, it is the $``d=1"$ version of Theorem 5.1(or Proposition 5.7) there. However, \cite{Luo:Betti} was written in a language (mathematically) different from here, so despite the repetition, we will present the full details of the proof in this paper. 

\begin{figure}
\centering
\includegraphics[height=60mm]{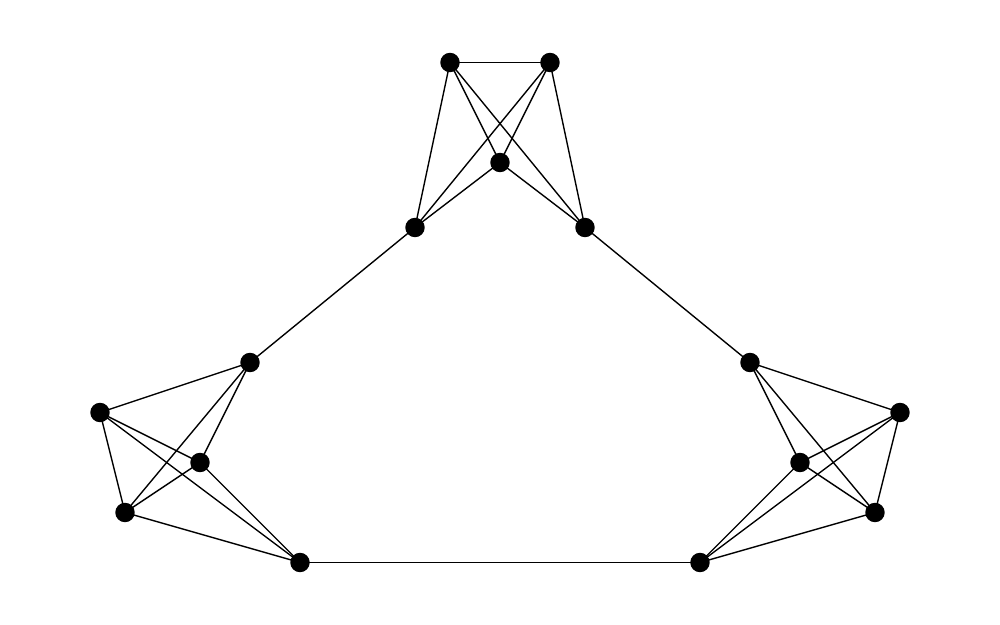}
\label{fig:15Vertices}
\caption{An example of 4-valent graph}
\end{figure}

\begin{example}\label{example:generic}
The graph $G$ with configuration $\p$ shown in Figure~\ref{fig:15Vertices} is Example 1.3 in \cite{Luo:Rigidity}. The configuration is slightly modified so that it is in general position.  It shows the order of the bound  given in Theorem~\ref{thm:generic} and Theorem~\ref{thm:general} are sharp. Let's briefly recall the construction. Take $3$ copies of complete graphs on $5$ vertices and delete one edge from each, then connect them together to form a loop as shown in the figure.  Both $r(G)$ and $r(G(\p))$ are $24$ for this graph.  This example can be easily generalized to a graph $G'$ of $5k$ vertices and configuration $\p'$ in general position such that both $r(G')$ and $r(G'(\p'))$ are $8k$. In this example, $r(G'(\p'))$ does not depend on $\p'$ as long as $\p'$ is in general position, but it should be emphasized that this is not usually the case.
\end{example}

\begin{figure}
\includegraphics[height=70mm]{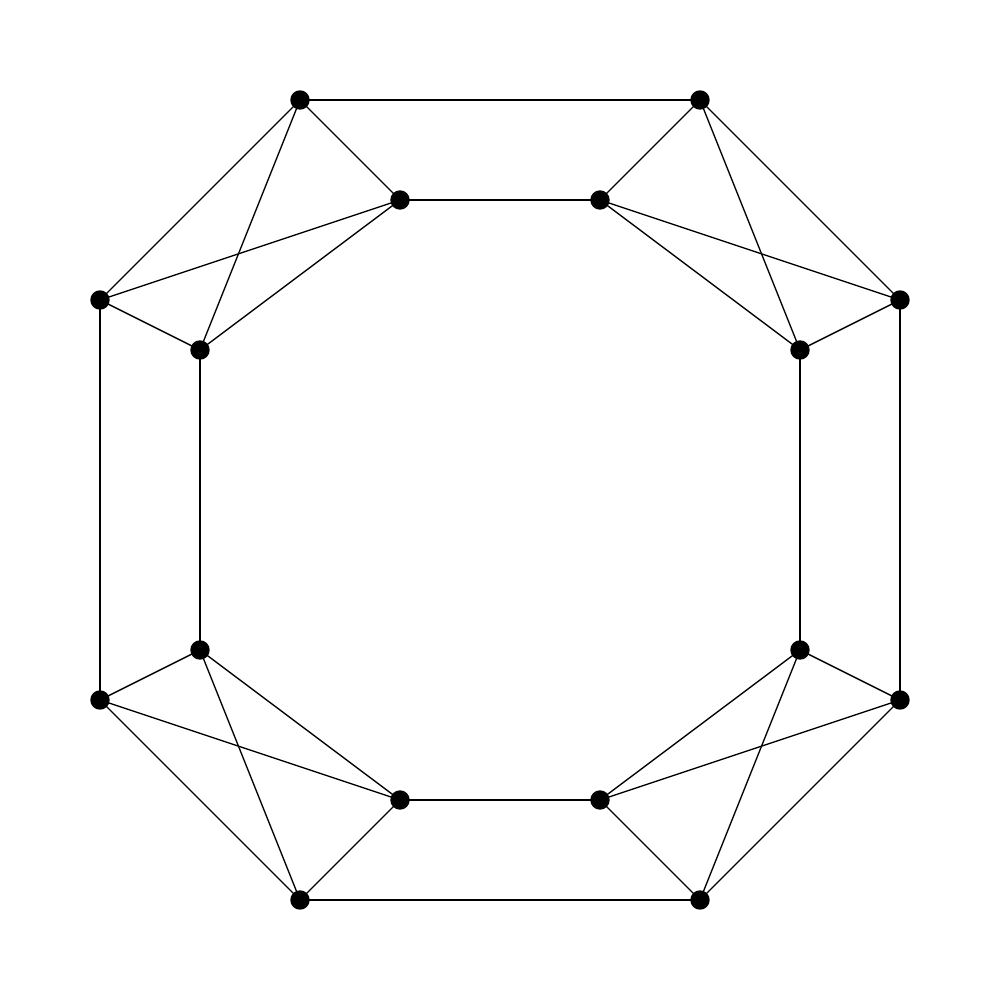}
\caption{An example of 4-valent graph that is 4-edge-connected}
\label{fig:GenericMaxS}
\end{figure}

\begin{example}
The graph $G$ shown in Figure~\ref{fig:GenericMaxS} is obtained by connecting $4$ copies of complete graphs on $4$ vertices. It is 4-edge-connected and of $r(G)=28$. This example can be easily generalized to a graph $G'$ with $4k$ vertices, such that $r(G')=7k$ if $k\geq 3$, $r(G')=13$ if $k=2$. This example shows the order of bound given in Theorem~\ref{thm:connected generic} is sharp.  
\end{example}

In order to introduce the example that shows the order of the bound given in Theorem~\ref{thm:connected general} is sharp, we first introduce the definition of Cartesian Product in graph theory.  
\begin{definition}
 the Cartesian product $G\Box H$ of graphs $G_1=(V_1, E_1)$ and $G_2=(V_2, E_2)$ is a graph such that
\begin{itemize}
\item
the vertex set of $G_1 \Box G_2$ is the Cartesian product $V_1\times V_2$; and
\item
any two vertices $(u_1,u_2)$ and $(v_1,v_2)$ are adjacent in $G_1 \Box G_2$ if and only if either
\begin{itemize}
\item[(a)]
$u_1 = v_1$ and $u_2$ is adjacent with $v_2$ in $G_2$, or
\item[(b)]
$u_2 = v_2$ and $u_1$ is adjacent with $v_1$ in $G_1$.
\end{itemize}
\end{itemize}
\end{definition}

\begin{lemma}\label{lemma:CartesianProduct}
Assume $(G_1,\p_1)$ and $(G_{2},\p_2)$ are  two connected graphs with planar configuration in general position. For any non-zero constants $a, b\in \R$, we can define a planar configuration of $G_1\Box G_2$ by 
\[\p(u_1, u_2)=a\p_1(u_1)+b\p_2(u_2).\]
If $\p$ is also a configuration in general position, then we have 
\[r(G(\p))=r(G_1(\p_1))+r(G_2(\p_2))+2(|V_1|-1)(|V_2|-1).\] 
\end{lemma}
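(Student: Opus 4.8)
The plan is to pass from the rank of $R(\p)$ to the dimension of its kernel, the space $F$ of infinitesimal flexes of $G(\p)$, via $r(G(\p)) = 2|V_1||V_2| - \dim F$. Writing $f_i := 2|V_i| - r(G_i(\p_i)) = \dim F_i$ for the dimension of the flex space $F_i$ of $G_i(\p_i)$, a short computation shows the asserted identity is equivalent to
\[\dim F = f_1 + f_2 - 2.\]
To prove this I would introduce the two subspaces $W_1,W_2\subseteq F$ obtained by extending flexes constantly: $W_1=\{\vv:\vv(u_1,u_2)=\mathbf{w}(u_1),\ \mathbf{w}\in F_1\}$ and $W_2=\{\vv:\vv(u_1,u_2)=\mathbf{w}(u_2),\ \mathbf{w}\in F_2\}$. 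Since every horizontal edge of $G(\p)$ has direction $a(\p_1(u_1)-\p_1(v_1))$ and every vertical edge has direction $b(\p_2(u_2)-\p_2(w_2))$, one checks directly that $W_1,W_2\subseteq F$. Their intersection is exactly the global translations (a field constant in $u_2$ and constant in $u_1$ is constant), so $\dim(W_1\cap W_2)=2$ and $\dim(W_1+W_2)=f_1+f_2-2$. This already gives $\dim F\geq f_1+f_2-2$, i.e.\ $r(G(\p))\leq r(G_1(\p_1))+r(G_2(\p_2))+2(|V_1|-1)(|V_2|-1)$.

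The substance of the proof is the reverse inclusion $F\subseteq W_1+W_2$. Fix $\vv\in F$. Scaling by $a$ and translating do not change a flex space, so the restriction of $\vv$ to each horizontal copy of $G_1$ is a flex of $G_1(\p_1)$; thus the slice $\sigma(u_2):=\vv(\cdot,u_2)$ lies in $F_1$ for every $u_2$, and likewise $\vv(u_1,\cdot)\in F_2$ for every $u_1$. I would then read off the vertical-edge equations: for an edge $\{u_2,w_2\}\in E_2$ with direction $\mathbf{d}=\p_2(u_2)-\p_2(w_2)$, the equations at all $u_1$ say precisely that the flex $\sigma(u_2)-\sigma(w_2)\in F_1$ is pointwise orthogonal to $\mathbf{d}$, i.e.\ it is a velocity field of $G_1$ all of whose vectors are parallel to the fixed direction $\mathbf{d}^{\perp}$.

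Here is the crux, and the step I expect to be the main obstacle. I claim such a field must be a global translation. Writing the field as $s(\cdot)\mathbf{d}^{\perp}$, the flex condition on an edge $\{u_1,v_1\}\in E_1$ forces $(s(u_1)-s(v_1))\,(\p_1(u_1)-\p_1(v_1))\cdot\mathbf{d}^{\perp}=0$. The factor $(\p_1(u_1)-\p_1(v_1))\cdot\mathbf{d}^{\perp}$ vanishes only when the $G_1$-edge is parallel to $\mathbf{d}$; but if $\p_1(u_1)-\p_1(v_1)$ were parallel to $\p_2(u_2)-\p_2(w_2)$, then the four points $\p(u_1,u_2),\p(v_1,u_2),\p(u_1,w_2),\p(v_1,w_2)$ would form a degenerate parallelogram and hence be collinear, contradicting that $\p$ is in general position. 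Hence $s(u_1)=s(v_1)$ across every edge and, $G_1$ being connected, $s$ is constant, so the field is a translation. Therefore $\sigma(u_2)-\sigma(w_2)$ is a translation for every edge of $G_2$, so $\sigma$ is constant modulo the $2$-dimensional translation subspace of $F_1$; connectivity of $G_2$ then gives $\vv(u_1,u_2)=\mathbf{w}_1(u_1)+\mathbf{t}(u_2)$ with $\mathbf{w}_1\in F_1$ and $\mathbf{t}\in F_2$, that is, $\vv\in W_1+W_2$. This proves $F=W_1+W_2$, and the dimension count $\dim F=f_1+f_2-2$ yields the stated formula. The delicate point throughout is that the argument genuinely uses \emph{general position} (no three image points collinear) rather than genericity: it is precisely the absence of parallel $G_1$- and $G_2$-edges, guaranteed by general position of $\p$, that collapses what would otherwise be a much larger flex space down to $W_1+W_2$.
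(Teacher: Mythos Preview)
Your argument is correct and self-contained. The reduction to $\dim F=f_1+f_2-2$ is right, the inclusion $W_1+W_2\subseteq F$ is immediate, and the key step---showing that a flex of $G_1(\p_1)$ whose values all lie in a fixed line must be a translation---is handled cleanly by your parallelogram observation: if an edge of $G_1$ were parallel to an edge of $G_2$ then four distinct image points of $\p$ would be collinear, contradicting general position. The decomposition $\vv=\mathbf w_1+\mathbf t$ then follows by connectivity, and $\mathbf t\in F_2$ because $\vv(u_1,\cdot)\in F_2$ and constants are flexes.

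The paper takes a different route: it does not argue directly but simply invokes Propositions~3.18 and~3.4 of \cite{Luo:Betti}, where the result is established in a cohomological framework (the rank there is denoted $r_1$). Your proof is therefore more elementary and entirely internal to rigidity theory: it works directly with the kernel of the rigidity matrix and uses nothing beyond linear algebra, connectivity, and the general-position hypothesis. What the citation-based approach buys is brevity in this paper and a link to a broader setting; what your approach buys is transparency about exactly where general position (as opposed to full genericity) enters---namely, only to rule out parallel $G_1$- and $G_2$-edge directions.
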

\begin{proof}
This follows immediately from Proposition 3.18 and Proposition 3.4 in \cite{Luo:Betti}, noting $r$ was denoted by $r_1$ there.
\end{proof}

The configuration $\p: V_1\times V_2\rightarrow \R^2$ given in the above lemma, which we assume to be in general position, will be denoted by $\p_1\Box\p_2$. The notation is not perfect in that it does not suggest the dependence of $\p$ on the choices of $a, b\in \R$, but this should not bother us since the quantity $r(G(\p))$ does not rely on $a, b$ as the above lemma suggests.

\begin{figure}
\subfigure[$K_3$]{
\centering
\includegraphics[scale=0.4]{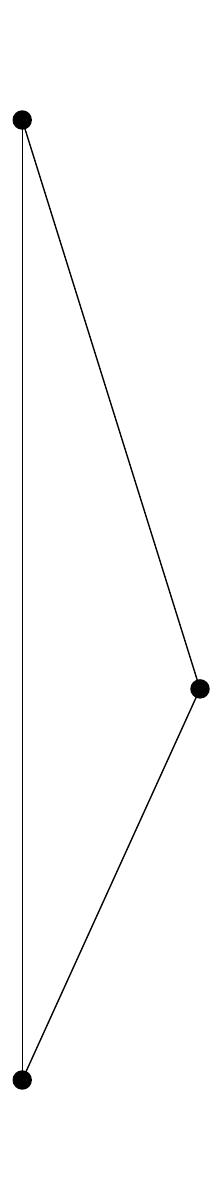}
\label{fig:K_3}
}
\subfigure[$P_6$]{
\centering
\includegraphics[scale=0.7]{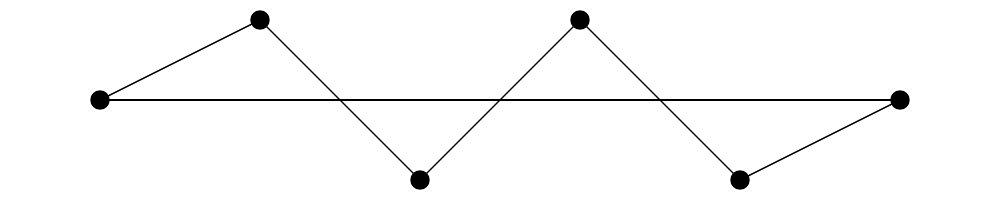}
\label{fig:P_6}
}

\subfigure[$K_3\Box P_6$]{
\centering
\includegraphics[height=60mm]{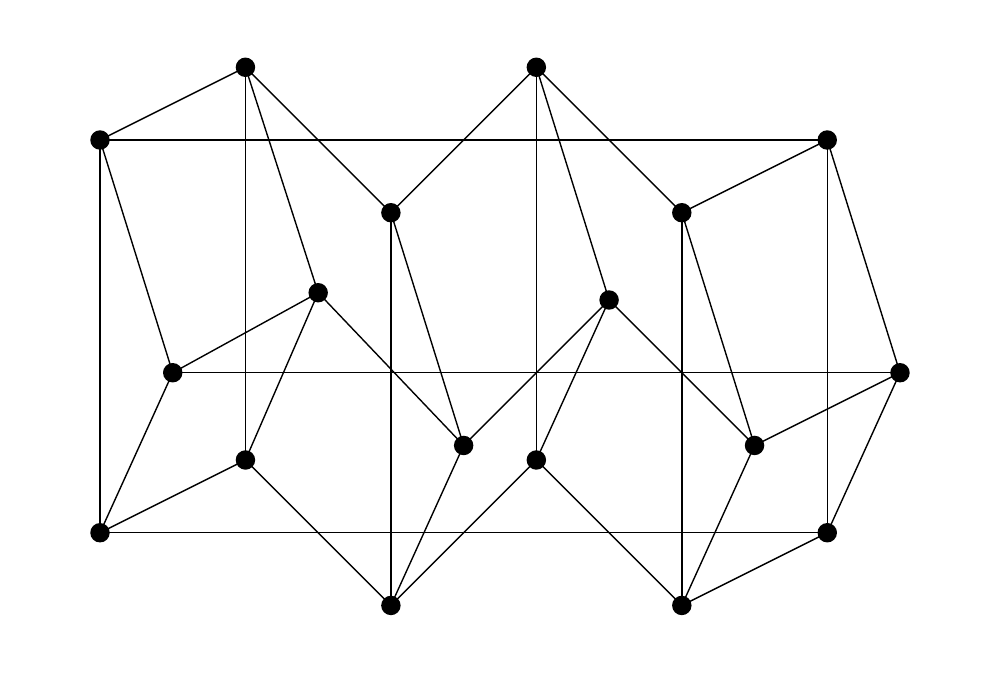}
\label{fig:K_3ProductP_6}
}
\caption{A 4-edge-connected 4-valent graph}
\label{fig:GeneralConnected}
\end{figure}

The following example is communicated to me by Prof. Bob Connelly.
\begin{example}
If we use $K_l$ to denote the the complete graph on $l$ vertices and $P_n$ to denote a $n$-gon, i.e., a regular 2-valent graph of $n$ vertices. Then $K_3\Box P_n$ is a 4-edge-connected 4-valent   graph. Figure~\ref{fig:K_3} and \ref{fig:P_6} give a planar configurations in general position for $K_3$ and $P_6$ respectively.   Figure~\ref{fig:K_3ProductP_6} shows an induced configuration in general position for $K_3\Box P_6$ in the way described in Lemma~\ref{lemma:CartesianProduct}. Denote by $G$ the product $K_3\Box P_6$ and by $\p$ the configuration illustrated in Figure~\ref{fig:K_3ProductP_6}, then according to Lemma~\ref{lemma:CartesianProduct}, \[r(G(\p))=3+6+2\times(3-1)\times(6-1)=29.\]
 This example is easily generalized to $G'=K_3\Box P_n$, which is a 4-edge-connected 4-valent graph, equipped with a planar configuration $\p'$ in general position that is induced from a configuration of $K_3$ and a configuration of $P_n$, and $r(G'(\p'))=5k-1=\dfrac{5|V|}{3}-1$. This class of graphs shows the order of the bound given in Theorem~\ref{thm:connected general} is sharp.
\end{example}

It is natural to ask the generalizations of these theorems about regular graphs of degree $d$ for $d\geq 5$. The $d=5$ version of Theorem~\ref{thm:generic} was proved in \cite{Luo:Rigidity}.  We ask the following questions. 
\begin{question}\label{question:generic}
Assume $G=(V,E)$ is a connected $d$-valent graph, $d\geq 6$, do we have 
\[r(G)\geq \dfrac{2d}{d+1}|V|-1?\]
Note that if the bound holds, then the order of it is sharp. Because we may take $k, k\geq 2,$ copies of $K_{d+1}$, delete one edge from each, then connect them to form a loop as we did in Example~\ref{example:generic}, call the resulting graph $G$. Then $r(G)=2dk=\dfrac{2d}{d+1}m$.  
\end{question}

\begin{question}
Assume $G=(V,E)$ is a connected $d$-valent graph, $d\geq 5$, and $\p: V\ra \R^2$ is a configuration in general position, do we have 
\[r(G(\p))\geq \dfrac{2d}{d+1}|V|-1?\]
Note that a positive answer to this question would imply the positive answer for Question~\ref{question:generic} as $r(G)\geq r(G(\p))$. Also, if the bound holds, the order of it is sharp. 
\end{question}

\begin{question}
Assume $G=(V,E)$ is a connected $d$-valent graph which is also $d$-edge-connected and $\p: V\ra \R^2$ is a configuration in general position, do we have
\[r(G(\p))\geq \dfrac{(2d-3)m-2(d-2)}{d-1}?\]
Again, we note that if the bound holds, then the order of it is sharp. We may consider $G=K_{d-1}\Box P_n$ with the planar configuration $\p$ in general position induced from that of $K_{d-1}$ and $P_n$ as in Lemma~\ref{lemma:CartesianProduct}. Then by Lemma~\ref{lemma:CartesianProduct}, we have \[r(G(\p))=2(d-1)-3+n+2(d-1-1)(n-1)=2dn-3n-1=\dfrac{2d-3}{d-1}m-1.\] 
Also note that the equality holds when $G=K_{d+1}$.
\end{question}

Other than edge-connectivity, there is another kind of connectivity in graph theory called {\it vertex-connectivity}.
\begin{definition}
A graph $G=(V, E)$ is called $k$-vertex-connected if for any $U\subseteq V$ with $|U|<k$, the graph $G'=(V', E')$ defined by $V'=V\backslash U, E'=E\backslash E_U$ is still connected, where $E_U$ is the subset of $E$ consisting of edges incident to some vertex in $U$.  Note that a connected graph is automatically $1$-vertex-connected.
\end{definition}

We can ask similar questions about the rank of rigidity matrix with vertex-connectivity assumptions.
\begin{question}
Assume $G=(V, E)$ is a regular $d$-valent graph that is also $k$-vertex-connected, $2\leq k\leq d$. What is the lower bound for $r(G)$? Assume $\p$ is a planar configuration in general position, what is the lower bound for $r(G(\p))$?
\end{question}

In the case of 4-valent graphs, the answer to the above question follows trivially from our results about $r(G)$ and $r(G(\p))$ when assuming (or not) edge-connectivities. We will simply state the result without proof.
\begin{corollary}
Assume $G$ is a 4-valent graph and $\p$ a planar configuration in general position.  If $G$ is $2$-vertex-connected, then $r(G)\geq r(G(\p))\geq \dfrac{8}{5}|V|-1$. If $G$ is $3$-vertex-connected or $4$-vertex-connected, then $r(G)\geq  \dfrac{7|V|-7}{4}$ and $r(G(\p))\geq \dfrac{5|V|-4}{3}$.
The order of all the above bounds are sharp.
\end{corollary}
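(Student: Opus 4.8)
The plan is to reduce the vertex-connectivity hypotheses to the edge-connectivity hypotheses already settled by Theorems~\ref{thm:generic}--\ref{thm:connected general}, using two elementary graph-theoretic facts. The first is Whitney's inequality: for any graph the vertex-connectivity is at most the edge-connectivity, so a $k$-vertex-connected graph is automatically $k$-edge-connected. The second is a parity observation special to $4$-valent graphs: for any bipartition $V=A\sqcup(V\ba A)$ the number of edges across the cut equals $4|A|-2e(A)$, where $e(A)$ denotes the number of edges with both endpoints in $A$, and this is always even. Since the edge-connectivity is the minimum size of such a cut, the edge-connectivity of a connected $4$-valent graph is even; being at most $\delta=4$ and at least $1$, it lies in $\{2,4\}$.

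With these in hand the three assertions follow at once. If $G$ is $2$-vertex-connected it is in particular connected, so Theorem~\ref{thm:generic} and Theorem~\ref{thm:general} give $r(G)\geq \frac{8}{5}|V|-1$ and $r(G(\p))\geq \frac{8}{5}|V|-1$; the remaining inequality $r(G)\geq r(G(\p))$ holds because $r(G)$ is the generic, hence maximal, rank of the rigidity matrix. If $G$ is $3$-vertex-connected or $4$-vertex-connected, then by Whitney it is at least $3$-edge-connected, and by the parity observation its edge-connectivity cannot equal $3$, so $G$ is in fact $4$-edge-connected; Theorem~\ref{thm:connected generic} and Theorem~\ref{thm:connected general} then yield $r(G)\geq \frac{7|V|-7}{4}$ and $r(G(\p))\geq \frac{5|V|-4}{3}$.

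For sharpness I would revisit the three families of examples already constructed and record their exact vertex-connectivities. The loop of three copies of $K_5$ (each minus an edge) from Example~\ref{example:generic} is $2$-vertex-connected but not $3$-vertex-connected, since removing the two junction vertices joining one block to its neighbors isolates the remaining three vertices of that block; as it realizes $r(G)=r(G(\p))=\frac{8}{5}|V|$, it shows the $2$-vertex-connected bound is sharp in order. For the other two cases I would verify, via Menger's theorem, that the loop of four copies of $K_4$ and the product $K_3\Box P_n$ are each $4$-vertex-connected (and hence also $3$-vertex-connected); the former realizes $r(G)=\frac{7}{4}|V|$ and the latter $r(G(\p))=\frac{5}{3}|V|-1$, establishing sharpness of the order of both the generic and the general-position bounds in the $3$- and $4$-vertex-connected cases.

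The two connectivity facts and the invocation of the four theorems are routine; the only step demanding genuine care is the sharpness claim, where the precise vertex-connectivity of each example must be pinned down. I expect the main obstacle to be confirming $4$-vertex-connectivity of the $K_4$-loop and of $K_3\Box P_n$, which amounts to exhibiting four internally vertex-disjoint paths between every pair of non-adjacent vertices; this is most cleanly handled by exploiting the vertex-transitivity of $K_3\Box P_n$ and the block symmetry of the $K_4$-loop rather than by an exhaustive case analysis.
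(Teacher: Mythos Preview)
Your proposal is correct and is exactly the argument the paper has in mind. The paper itself gives no proof of this corollary, stating only that it ``follows trivially from our results about $r(G)$ and $r(G(\p))$ when assuming (or not) edge-connectivities'' and then presenting the statement without proof. Your reduction via Whitney's inequality $\kappa(G)\leq\lambda(G)$ together with the parity observation that every edge-cut in a $4$-regular graph has even size is precisely the ``trivial'' step the author is leaving to the reader, and your handling of sharpness by revisiting the three example families is the intended one as well.
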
 

{\bf Acknowledgement: } I would like to thank Bob Connelly and Tara Holm for some helpful discussions.
\section{\bf Proof of Theorem~\ref{thm:connected general}}\label{sec:generalConnected}
The content in this section is essentially the work in Section 5 of \cite{Luo:Betti}. 

First we briefly recall some notations we used and results we obtained in \cite{Luo:Rigidity}, the readers are referred to Section 2 of \cite{Luo:Rigidity} for more details and proofs about these (up to Lemma~\ref{lemma:disconnect2} here).
\begin{notation}
Let $G=(V,E)$ be a graph, the vertices are labeled as $v_1, v_2, ..., v_{|V|}$, the edge incident to $v_i$ and $v_j$ is denoted by $e_{ij}$. We do not distinguish between $e_{ij}$ and $e_{ji}$, but most of the time we will make the first coordinate smaller than the second one. 

We will use $\lambda(v)$ to denote the degree of $v$. When the graph $G$ needs to be emphasized (this could be the case if $v$ is the vertex of two graphs under discussion), we use $\lambda_{G}(v)$ to denote the degree of $v$ as a vertex of $G$.

We will use $E_{v_i}$ to be subset of $E$ consisting of edges incident to $v_i$. For any $U\subseteq V$, we will use $E_U$ to denote the set of edges incident to some vertex in $U$. 

The configuration $\p: V\ra \R^2$ will {\it always} be assumed to be in general position.  If $G'$ is a subgraph of $G$, then by abuse of notation, we will also use $\p$ to denote the configuration of $G'$ induced from that of $G$.
\end{notation}

\begin{definition}
Assume $G=(V, E)$ is a graph and $\p: V\ra \R^2$ is a planar configuration in general position. We use $s(G)=|E|-r(G)$ to denote the {\it number of stress} of $G$ and use $s_{\p}(G)=|E|-r(G(\p))$ to denote the {\it number of stress} of $G(\p)$. 
\end{definition}

The following four lemmas and corollary are stated and proved in \cite{Luo:Rigidity} under generic configuration assumption, but the proof could be carried over to configuration in general position without any change and we are not going to bother rewriting the proofs.

The following lemma is Lemma 2.3 in \cite{Luo:Rigidity} and called the {\it Deleting Lemma}.
\begin{lemma}\label{lemma:delete}
Given $G=(V, E)$ and $\p: V\ra \R^2$ a planar configuration in general position, assume there is a vertex $v_i$ with $\lambda(v_i)=1$ or $2$. Define $G'$ to be the graph obtained from $G$ by deleting $v_i$ and $E_{v_i}$, then $s(G')=s(G)$ and $s_{\p}(G')=s_{\p}(G)$.
\end{lemma}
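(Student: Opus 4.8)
The plan is to interpret the number of stress as the dimension of the space of self-stresses, that is, the left null space $\S(G(\p)) = \{\omega \in \R^{|E|} : \omega^{T} R(\p) = 0\}$, whose dimension is exactly $|E| - r(G(\p)) = s_{\p}(G)$. I would then exhibit an explicit linear isomorphism between $\S(G(\p))$ and $\S(G'(\p))$, given by restricting a stress vector to the edges $E' = E \ba E_{v_i}$ that survive the deletion, with inverse given by extending a stress of $G'$ by zero on the deleted edges $E_{v_i}$.

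The heart of the argument is the observation that the block of columns of $R(\p)$ indexed by $v_i$ has nonzero entries only in the rows indexed by the edges in $E_{v_i}$. Hence, for any $\omega \in \S(G(\p))$, the portion of the equation $\omega^{T} R(\p) = 0$ living in the columns of $v_i$ reads $\sum_{e_{ij} \in E_{v_i}} \omega_{e_{ij}}\bigl(\p(v_i) - \p(v_j)\bigr) = 0$. When $\la(v_i) = 1$ this is a single nonzero vector (general position forces $\p(v_i) \neq \p(v_j)$), so $\omega_{e_{ij}} = 0$; when $\la(v_i) = 2$, the two vectors $\p(v_i) - \p(v_j)$ and $\p(v_i) - \p(v_k)$ are linearly independent precisely because $\p(v_i), \p(v_j), \p(v_k)$ are not collinear, again forcing both coefficients to vanish. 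Thus every stress of $G$ is automatically supported on $E'$.

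Given this, the isomorphism is routine bookkeeping. Once $\omega$ vanishes on $E_{v_i}$, the equation at the columns of $v_i$ holds trivially, while the equation in the columns of any other vertex $v_k$ loses only the (zero) contributions of the deleted edges, so it coincides with the corresponding stress equation for $G'$; hence $\omega|_{E'} \in \S(G'(\p))$. Conversely, extending any $\omega' \in \S(G'(\p))$ by zero on $E_{v_i}$ satisfies all equations of $G$ by the same accounting. These two maps are mutually inverse, so $s_{\p}(G) = \dim \S(G(\p)) = \dim \S(G'(\p)) = s_{\p}(G')$. The generic statement $s(G) = s(G')$ then follows by applying this to a generic $\p$, which is in particular in general position and restricts to a generic configuration of $G'$.

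The only place genuine content enters is the degree-$2$ case, and the sole obstacle there is confirming that the two edge vectors at $v_i$ are independent — which is exactly the non-collinearity built into the general position hypothesis, and is what would fail for an arbitrary configuration. I expect no further difficulty, since everything else is linear algebra on the rows of the rigidity matrix.
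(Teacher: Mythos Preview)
Your proposal is correct and is the standard argument: the paper itself does not reprove this lemma but cites \cite{Luo:Rigidity}, noting that the generic proof there carries over verbatim to configurations in general position, and the stress-space isomorphism you describe (forcing $\omega_e=0$ on $E_{v_i}$ via the equilibrium equation at $v_i$, using non-collinearity for the degree-$2$ case) is exactly that argument.
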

\begin{corollary}\label{corollary:delete}
Given  $G=(V, E)$ and $\p: V\ra \R^2$ a planar configuration in general position, let $v_i\in V$. Define $G'$ to be the graph obtained from $G$ by deleting $v_i$ and $E_{v_i}$, then \[0\leq s(G)-s(G')\leq \max(\lambda(v_i)-2, 0),\]
and \[0\leq s_{\p}(G)-s_{\p}(G')\leq \max(\lambda(v_i)-2, 0).\]
\end{corollary}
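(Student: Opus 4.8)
The plan is to derive both inequalities simultaneously from two ingredients: the Deleting Lemma (Lemma~\ref{lemma:delete}), which disposes of degree-$2$ vertices, and an elementary estimate for the effect of removing a single edge. Since $\max(\lambda(v_i)-2,0)=0$ when $\lambda(v_i)\leq 2$, I first dispose of the small-degree cases: if $\lambda(v_i)=0$ then $G'$ is obtained by removing an isolated vertex, so $|E|$ and the rigidity matrix are unchanged and both $s$ and $s_\p$ are preserved; if $\lambda(v_i)=1$ or $2$, Lemma~\ref{lemma:delete} gives $s(G')=s(G)$ and $s_\p(G')=s_\p(G)$ directly. Thus it remains to treat $\lambda(v_i)\geq 3$, where I must show $0\leq s(G)-s(G')\leq \lambda(v_i)-2$, and likewise for $s_\p$.

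The first ingredient is the single-edge estimate. If $e$ is any edge of $G$ and $G-e$ denotes the graph with $e$ removed, then the rigidity matrix of $G-e$ is obtained from that of $G$ by deleting exactly one row; hence the rank drops by $0$ or $1$, while $|E|$ drops by exactly $1$. Writing this out, $s(G)-s(G-e)=1+(r(G-e)-r(G))\in\{0,1\}$, and the identical computation for the configuration $\p$ gives $s_\p(G)-s_\p(G-e)\in\{0,1\}$. So each edge deletion decreases the number of stresses by $0$ or $1$.

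The second step is the reduction. Choose $\lambda(v_i)-2$ of the edges in $E_{v_i}$ and delete them one at a time, arriving at a graph $\tilde G$ in which $v_i$ has degree exactly $2$; telescoping the single-edge estimate over these $\lambda(v_i)-2$ deletions yields
\[0\leq s(G)-s(\tilde G)\leq \lambda(v_i)-2,\]
and the same for $s_\p$. Now apply the Deleting Lemma to the degree-$2$ vertex $v_i$ of $\tilde G$: deleting $v_i$ together with its two remaining incident edges preserves the number of stresses, so $s(\tilde G)=s(G')$ and $s_\p(\tilde G)=s_\p(G')$, where I use that the graph obtained this way from $\tilde G$ is precisely $G$ with $v_i$ and all of $E_{v_i}$ removed, i.e.\ $G'$. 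Combining the last two displays gives the desired bounds.

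There is essentially no hard obstacle here: the statement is a bookkeeping consequence of Lemma~\ref{lemma:delete}. The only points needing care are the verification that a single edge deletion removes exactly one row of the rigidity matrix (so that the rank can fall by at most $1$), and the observation that after deleting $\lambda(v_i)-2$ edges and then the degree-$2$ vertex one recovers exactly $G'$, so that the telescoped estimate and the Deleting Lemma chain together correctly. Since a generic configuration is in particular in general position, the argument for $s$ is word-for-word the same as that for $s_\p$.
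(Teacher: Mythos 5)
Your proof is correct and follows the route the paper intends: the paper does not reprove this corollary but cites \cite{Luo:Rigidity}, where it is derived exactly as you do, by peeling off $\lambda(v_i)-2$ edges at $v_i$ (each edge deletion removes one row of the rigidity matrix, so changes $s$ by $0$ or $1$) and then invoking the Deleting Lemma on the resulting degree-$2$ vertex. Your separate handling of the $\lambda(v_i)=0$ case, which Lemma~\ref{lemma:delete} does not literally cover, is a small but legitimate point of care.
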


The following two lemmas are Corollary 2.8 and Corollary 2.9 of \cite{Luo:Rigidity}.
\begin{lemma}\label{lemma:disconnect1}
Given $G(\p)$ a graph with planar configuration in general position, assume deleting one edge $e_{ij}$ increases the number of connected component of $G$ by $1$. Denote by $G'$ the graph obtained from $G$ by deleting $e_{ij}$. Then $s(G')=s(G)$ and $s_{\p}(G')=s_{\p}(G)$.
\end{lemma}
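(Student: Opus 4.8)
The plan is to prove the equivalent statement that deleting the bridge $e_{ij}$ drops the rank of the rigidity matrix by exactly one. Since $G'$ has exactly one fewer edge than $G$, the definition $s_{\p}(G)=|E|-r(G(\p))$ gives
\[ s_{\p}(G)-s_{\p}(G') = 1-\bigl(r(G(\p))-r(G'(\p))\bigr), \]
and deleting a single edge removes a single row of $R(\p)$, so $r(G(\p))-r(G'(\p))\in\{0,1\}$. Thus it suffices to show that the row of $R(\p)$ indexed by $e_{ij}$ is linearly independent from the remaining rows; equivalently, that in every self-stress $\omega$ of $G(\p)$ -- that is, every $\omega\in\R^{|E|}$ with $\omega^{T}R(\p)=0$ -- the coordinate $\omega_{ij}$ on the edge $e_{ij}$ vanishes. (Conversely, if $e_{ij}$'s row were dependent on the others, a dependency among all rows would furnish a self-stress with $\omega_{ij}\neq 0$.)

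The key step is a summation argument that exploits the fact that $e_{ij}$ is the unique edge joining the two sides of the cut. Let $C$ be the connected component of $G'=(V, E\ba\{e_{ij}\})$ that contains $v_i$; by hypothesis $v_j\notin C$, and $e_{ij}$ is the only edge of $G$ with exactly one endpoint in $C$. The self-stress condition of $\omega$ at a vertex $v_k$ reads $\sum_{e_{kl}\in E_{v_k}}\omega_{kl}(\p(v_k)-\p(v_l))=0$. Summing these vector equations over all $v_k\in C$, every edge with both endpoints in $C$ is counted twice with opposite signs and cancels, while the only edge with a single endpoint in $C$ is $e_{ij}$, contributing once at $v_i$. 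Hence the sum collapses to
\[ \omega_{ij}\bigl(\p(v_i)-\p(v_j)\bigr)=0. \]
Because $\p$ is in general position, the points $\p(v_i)$ and $\p(v_j)$ are distinct, so $\p(v_i)-\p(v_j)\neq 0$ and therefore $\omega_{ij}=0$, as claimed.

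To finish, I would record the resulting isomorphism of stress spaces: restriction $\omega\mapsto\omega|_{E\ba\{e_{ij}\}}$ sends a self-stress of $G(\p)$ to one of $G'(\p)$ -- the equilibrium conditions at $v_i$ and $v_j$ survive restriction precisely because $\omega_{ij}=0$ -- and extension by zero is its inverse, whence $s_{\p}(G)=s_{\p}(G')$. For the generic statement $s(G)=s(G')$, one notes that a generic realization is in particular in general position, so $\p(v_i)\neq\p(v_j)$ and the identical argument applies to $r(G)$. The only place that requires genuine care, and the crux of the lemma, is the cancellation count in the summation: it is exactly the bridge hypothesis that guarantees a single surviving boundary term and hence the clean conclusion $\omega_{ij}=0$. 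Were $e_{ij}$ not a cut edge, several edges would cross between the two sides and no individual coordinate could be isolated in this way.
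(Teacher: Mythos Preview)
Your argument is correct. The summation of the equilibrium equations over one side of the cut is exactly the right idea: internal edges cancel in pairs, only the bridge survives, and the nonvanishing of $\p(v_i)-\p(v_j)$ forces $\omega_{ij}=0$. The translation between ``the bridge row is independent'' and ``$s_{\p}$ is unchanged'' is handled cleanly, and the generic case follows as you say since a generic configuration is in particular in general position.

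As for comparison: the paper does not actually prove this lemma here. It is stated with the remark that it is Corollary~2.8 of \cite{Luo:Rigidity}, proved there under the generic hypothesis, with the comment that ``the proof could be carried over to configuration in general position without any change.'' Your write-up therefore supplies what the paper omits: a short, self-contained proof valid for any configuration with $\p(v_i)\neq\p(v_j)$ (in particular, for configurations in general position once $|V|\ge 3$, and for all generic configurations). The underlying mechanism---that a bridge carries zero stress---is standard in rigidity theory and is presumably what the cited corollary encodes, so your approach is not in tension with the paper's; it simply makes explicit an argument the paper delegates to a reference.
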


\begin{lemma}\label{lemma:disconnect2}
Given $G(\p)$ a graph with planar configuration in general position, assume deleting some two edges $e_{i_1j_1}$ and $e_{i_2j_2}$ increases the number of connected components of $G$ by $1$, but deleting any one of these two edges does not change the number of connected components.  Denote by $G'$ the graph obtained from $G$ by deleting $e_{i_1j_1}$ and $e_{i_2j_2}$. Then $s(G')=s(G)$ and $s_{\p}(G')=s_{\p}(G)$.
\end{lemma}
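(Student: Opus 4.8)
The plan is to argue directly with the rigidity matrix $R(\p)$ and its row space, exploiting the fact that $s_{\p}(G)=|E|-r(G(\p))$ is the dimension of the space of self-stresses (the left null space of $R(\p)$). Let $A$ and $B$ be the two connected components of $G'$, and order the $2|V|$ columns of $R(\p)$ so that the $A$-coordinates come first. Every edge internal to $A$ contributes a row supported on the $A$-columns, and these rows constitute the rigidity matrix $R_A$ of $(A,\p|_A)$; symmetrically for $B$. The only rows meeting both coordinate blocks are those of the two cut edges $e_{i_1j_1}$ and $e_{i_2j_2}$. Thus $R(\p)$ is the block-diagonal matrix $R_A\oplus R_B$ with two extra rows adjoined, so $r(G'(\p))=r(R_A)+r(R_B)$ and $r(G(\p))=r(G'(\p))+t$ with $t\in\{0,1,2\}$. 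Since $|E|=|E'|+2$, the asserted equality $s_{\p}(G)=s_{\p}(G')$ is equivalent to $t=2$, i.e.\ to the statement that the two cut rows are linearly independent modulo the row space of $R_A\oplus R_B$.

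To prove $t=2$, suppose a combination $\alpha\,\mathrm{row}(e_{i_1j_1})+\beta\,\mathrm{row}(e_{i_2j_2})$ lies in $\mathrm{rowspace}(R_A)\oplus\mathrm{rowspace}(R_B)$; I must show $\alpha=\beta=0$. Orient the cut edges so that $i_1,i_2\in A$ and $j_1,j_2\in B$, and set $\vec u_k=\p(v_{i_k})-\p(v_{j_k})$. Projecting onto the $A$-columns, the $A$-part of the combination is a load in $\R^{2|A|}$ supported at $v_{i_1},v_{i_2}$ with values $\alpha\vec u_1,\beta\vec u_2$, and it must lie in $\mathrm{rowspace}(R_A)=(\ker R_A)^{\perp}$; in particular it is orthogonal to every trivial infinitesimal motion of $A$. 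Orthogonality to the two translations is the force balance $\alpha\vec u_1+\beta\vec u_2=0$, while orthogonality to the infinitesimal rotation is the torque balance
\[
\p(v_{i_1})\wedge(\alpha\vec u_1)+\p(v_{i_2})\wedge(\beta\vec u_2)=0,
\]
where $x\wedge y$ denotes the scalar cross product in $\R^2$.

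Now I distinguish cases according to the geometry of the cut. If the cut edges share their $A$-endpoint ($i_1=i_2$), the load is concentrated at one vertex, so force balance forces $\alpha\vec u_1+\beta\vec u_2=0$ with $\vec u_1,\vec u_2$ running from the common point to $v_{j_1}\neq v_{j_2}$; these are independent because $v_{i_1},v_{j_1},v_{j_2}$ are not collinear, hence $\alpha=\beta=0$. The symmetric case $j_1=j_2$ is handled on the $B$-side, and when $|A|=1$ we are necessarily in this situation. If the four endpoints are distinct and $\vec u_1,\vec u_2$ are independent, force balance alone gives $\alpha=\beta=0$. The remaining case is four distinct endpoints with $\vec u_1\parallel\vec u_2$, say $\vec u_2=c\vec u_1$ with $c\neq0$: force balance gives $\alpha=-c\beta$, and substituting into the torque balance collapses it to $\beta c\,(\p(v_{i_2})-\p(v_{i_1}))\wedge(\p(v_{i_1})-\p(v_{j_1}))=0$. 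Since $v_{i_1},v_{i_2},v_{j_1}$ are three distinct vertices, general position makes this wedge nonzero, forcing $\beta=0$ and hence $\alpha=0$.

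This establishes $t=2$ and therefore $s_{\p}(G)=s_{\p}(G')$; the statement for the generic quantity $s$ follows from the identical computation, since a generic configuration is in particular in general position. The genuinely delicate point — the step I expect to be the main obstacle — is precisely the case of two disjoint, parallel cut edges: there the translational force balance is not enough to kill a stress on the cut, and one must bring in the rotational (torque) balance together with the no-three-points-collinear hypothesis. Every other configuration of the cut is already forced by force balance and the non-vanishing of a single edge vector, so the whole argument hinges on using general position in exactly this one geometric degeneracy.
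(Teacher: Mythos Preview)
Your argument is correct. The block structure of $R(\p)$ with respect to the partition $V=A\sqcup B$ is exactly as you describe, and the reduction of $s_\p(G)=s_\p(G')$ to the statement ``the two cut rows are independent modulo $\mathrm{rowspace}(R_A)\oplus\mathrm{rowspace}(R_B)$'' is clean. The key observation---that any element of $\mathrm{rowspace}(R_A)$ is an equilibrium load on $A$, hence annihilated by the three trivial motions---gives you force and torque balance, and your case analysis then dispatches each geometric configuration of the cut correctly. In particular, the parallel-cut case is the one place where general position is genuinely needed, and your torque computation handles it. One cosmetic remark: you silently assume $G$ is connected when you write ``the two connected components of $G'$''; this is harmless (both cut edges lie in a single component of $G$, and the others are untouched), but it is worth a sentence.

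As for comparison with the paper: the paper does not actually prove this lemma. It imports the statement from \cite{Luo:Rigidity} (Corollary~2.9 there), asserting that the proof given for generic configurations carries over verbatim to configurations in general position. Your proof is therefore more than the paper itself supplies. The argument in the cited reference is phrased dually---one shows that any self-stress $\omega$ on $G$ must vanish on the two cut edges, by summing the equilibrium equations over one side of the cut---but this is the same linear algebra read from the left null space rather than the row space, and the geometric case split (shared endpoint, independent directions, parallel directions) is identical.
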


\begin{definition}\label{def:trim}
A graph $G=(V, E)$ is called {\it trimmed} if 
\begin{itemize}
\item each vertex of $G$ is of degree at least $3$;
\item each connected component of $G$ is $3$-edge-connected.
\end{itemize}
\end{definition}

For any graph $G=(V, E)$, we can obtain a trimmed subgraph $\tilde{G}$ of $G$ in the following way.  If $G$ is trimmed, then let $\tilde{G}=G$. If $G$ is not trimmed, then we first obtain a subgraph $G_1$ of $G$ in one of the following two ways:
\begin{itemize}
\item[(1)] If there is vertex $v_i$ of $G$ of degree $1$ or $2$, let $G_1=(V\ba\{v_i\}, E\ba E_{v_i})$.  The choice of $v_i$ may not be unique;
\item[(2)] if every vertex is of degree at least $3$, then there exists $F \subseteq E$, such that $|F|\leq 2$, and removing $F$ would increase the number of connected components of $G$, but removing any proper subset of $F$ would not increase the number of connected components of $G$, then let $G_1=(V, E\ba F)$. Again, the choice of $F$ may not be unique.
\end{itemize} 
If $G_1$ is trimmed, then we let $\tilde{G}=G_1$. Otherwise, we obtain $G_2$ as a subgraph of $G_1$ in the same way as described above. Following this process we get a sequence of graphs $G=G_0, G_1, G_2, ..., G_q$, where $G_{k+1}$ is a subgraph of $G_k$ obtained in one of two ways we just described, and $G_q$, which could be empty, is trimmed. We let $\tilde{G}$ be $G_q$.

\begin{definition}\label{def:trimming}
We call the process of getting $\tilde{G}$ from $G$ described above the {\it trimming process}.   It follows from Lemma~\ref{lemma:delete}, Lemma~\ref{lemma:disconnect1} and Lemma~\ref{lemma:disconnect2} that $s(\tilde{G})=s(G)$ and $s_{\p}(\tilde{G})=s_{\p}(G)$ for any planar configuration in general position.
\end{definition}

\begin{remark}
Evan we may have different choices for each graph $G_i$ in the middle of the trimming process,  the graph $\tilde{G}$ in fact does not depend on the choice of $G_i$'s, it is the maximal trimmed subgraph of $G$. We will not need this, hence will not bother with the proof.
\end{remark}

We make the following definition so that the statements and proofs in the rest of this section could be made more concise.
\begin{definition}\label{def:A_4}
We call a graph $G=(V, E)$ is {\it of type $A_{4}$} if 
\begin{itemize}
\item each vertex of $G$ is of degree $3$ or $4$;
\item each connected component of $G$ has at least one vertex of degree $3$;
\item each connected component of $G$ is $3$-edge-connected.
\end{itemize}
\end{definition}

The following proposition is a special case of Proposition 5.7 of \cite{Luo:Betti}.
\begin{proposition}\label{prop:s for A_4}
Assume $G=(V, E)$ is a graph of type $A_{4}$ and $\p: V\ra \R^2$ a planar configuration in general position,  then 
\begin{equation}\label{eq:s for A_4}
s_{\p}(G)\leq \dfrac{n_{4}(G)}{3}+c(G),\end{equation}
where $n_{4}(G)=\{v_i\in V\big| \lambda(v_i)=4\}$ is the number of vertices of degree $4$, $c(G)$ is the number of connected components of $G$.
\end{proposition}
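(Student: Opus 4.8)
The plan is to prove the inequality by induction on $|V|$, using Corollary~\ref{corollary:delete} and the trimming process as the arithmetic engine. Since $s_\p$, $n_4$, and $c$ are all additive over connected components, I may always assume the reductions happen inside one component at a time, and the $+c(G)$ term will be there precisely to absorb any new components that the reductions create. It is useful to record the reformulation: writing $2|E|=3n_3(G)+4n_4(G)$ and $|V|=n_3(G)+n_4(G)$ gives $n_4(G)=2|E|-3|V|$, so the target inequality is equivalent to the rank bound $r(G(\p))\ge |V|-c(G)+\tfrac{1}{3}|E|$. This makes clear why the constant $3$ appears: I will want each deletion to convert (at least) three degree-$4$ vertices into degree-$3$ vertices for every unit of stress I am willing to spend.

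The core mechanism is the following clean case. By Definition~\ref{def:A_4} a component of $G$ contains a degree-$3$ vertex $v$; suppose its three neighbors $a,b,c$ all have degree $4$, and set $G'=G-v$ (delete $v$ and its incident edges). Because $G$ is $3$-edge-connected on that component and $v$ has degree $3$, the component cannot split: any splitting of $G-v$ into $X,Y$ would exhibit an edge cut of size $\le 2$ around $X$ in $G$, contradicting $3$-edge-connectivity, so $c(G')=c(G)$. After the deletion $a,b,c$ drop to degree $3$ and every other degree is unchanged, so $G'$ has all degrees in $\{3,4\}$, minimum degree $3$, and $n_4(G')=n_4(G)-3$. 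Corollary~\ref{corollary:delete} gives $s_\p(G)\le s_\p(G')+1$. If $G'$ is again of type $A_4$ (the vertices $a,b,c$ already witness the required degree-$3$ vertex, so the only thing to verify is $3$-edge-connectivity), the inductive hypothesis yields $s_\p(G')\le \tfrac{n_4(G)-3}{3}+c(G)=\tfrac{n_4(G)}{3}-1+c(G)$, and hence $s_\p(G)\le \tfrac{n_4(G)}{3}+c(G)$, exactly the claim. The point is that one degree-$3$ deletion costs at most one unit of stress while erasing three vertices from the degree-$4$ count, and $\tfrac{3}{3}=1$ is precisely this trade-off.

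When $G'$ fails to be $3$-edge-connected I would invoke the trimming process of Definition~\ref{def:trimming}: by Lemmas~\ref{lemma:delete}, \ref{lemma:disconnect1} and \ref{lemma:disconnect2} it preserves $s_\p$, and it returns a disjoint union of type-$A_4$ graphs on strictly fewer vertices, to which the inductive hypothesis applies. The base case is $n_4(G)=0$, a disjoint union of connected cubic $3$-edge-connected graphs, where the target is $s_\p\le c(G)$. On each component one observes that any stress is carried by an overbraced subgraph ($e'>2n'-3$); in a cubic graph $e'\le \tfrac32 n'$ forces $n'\le 5$, and on at most five vertices of maximum degree $3$ the only overbraced subgraph is a $K_4$, which in a connected cubic graph must be the whole component. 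Thus each cubic component is either $K_4$ (with $s_\p=1$) or carries no stress, giving $s_\p\le c(G)$.

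The main obstacle is everything outside the clean case, and this is where the real work lies. First, the chosen degree-$3$ vertex may have a degree-$3$ neighbor, so deletion produces a degree-$2$ vertex and a cascade of further deletions under trimming. Second, and more seriously, removing a $2$-edge-cut during trimming raises $c$ by $1$ while lowering $n_4$ only by the number of cut-edge endpoints that happened to have degree $4$; if fewer than three such endpoints are degree $4$, the quantity $\tfrac{n_4}{3}+c$ can increase, so one cannot apply the deletion lemmas blindly and still close the induction. Consequently the proof cannot simply delete an arbitrary degree-$3$ vertex: the reduction must be orchestrated so that each unit of spent stress is matched by a genuine loss of three degree-$4$ vertices. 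I would therefore organize the argument around a minimal counterexample and show that its structure (no degree-$3$ vertex with all degree-$4$ neighbors, no $2$-edge-cut whose removal pays for itself, no cubic or $K_4$-type piece) is impossible, forcing a contradiction; verifying that this structural dichotomy is exhaustive, and that in every admissible configuration one of the accounted reductions is available, is the crux of the proof.
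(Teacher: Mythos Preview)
Your overall framework---induction on $|V|$, delete a degree-$3$ vertex, trim, and recurse---is exactly the paper's. The gap is in the accounting you flag as ``the crux'': you track $n_4$ and $c$ step by step through the trimming process and correctly observe that a single $2$-cut removal can raise $c$ by $1$ while lowering $n_4$ by fewer than $3$, so the local bookkeeping fails. The paper sidesteps this entirely with a global comparison. After deleting $v_t$ and trimming, write $\tilde{G'}=\bigsqcup_{i=1}^a G_i$. For each component $G_i$ with vertex set $V_i\subsetneq V$, the edge boundary of $V_i$ in the \emph{original} $G$ has size at least $3$ by $3$-edge-connectivity of $G$. Each boundary edge has a distinct endpoint in $V_i$ (two such edges sharing an endpoint would force that vertex to have degree $\le 2$ in the trimmed $G_i$, impossible), and that endpoint must satisfy $\lambda_G=4$, $\lambda_{G_i}=3$. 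Hence every $G_i$ contains at least three vertices that were degree $4$ in $G$ but are degree $3$ in $G_i$, giving $\sum_i n_4(G_i)\le n_4(G)-3a$. The induction then closes in one line:
\[
s_\p(G)\le s_\p(\tilde{G'})+1\le \tfrac{1}{3}\bigl(n_4(G)-3a\bigr)+a+1=\tfrac{n_4(G)}{3}+1.
\]
No case split on the neighbors of $v_t$, no minimal-counterexample analysis, and the base case is simply the empty graph.

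A secondary issue: your cubic base case invokes ``a stress forces an overbraced subgraph with $e'>2n'-3$'', which is a Laman-type statement valid for \emph{generic} configurations. The proposition concerns configurations merely in general position, where independence of edge sets is not characterized combinatorially, so that step would require separate justification. With the global accounting above the cubic case never needs to be isolated.
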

\begin{proof}
We are going to use induction on the size of $|V|$. The graph of type $A_4$ with lease number of vertices is the empty graph, inequality \eqref{eq:s for A_4} holds trivially in this case. 

Now we consider $G=(V, E)$ with $|V|=m\geq 1$ and assume \eqref{eq:s for A_4} holds for $|V|<m$. If $c(G)>1$, then each connected compnent of $G$ is also of type $A_4$ and of less vertices. So the induction hypothesis says \eqref{eq:s for A_4} holds for each connected component. We may then add them up to show \eqref{eq:s for A_4} holds for $G$ as well. Now we assume $G$ is connected. 

Since $G$ is of type $A_4$, there exists a vertex $v_t$ of degree $3$. Define $G'=(V', E')$ by $V'=V\ba\{v_t\}$ and $E'=E\ba E_{v_i}$. By Corollary~\ref{corollary:delete}, we have $s_{\p}(G)\leq s_{\p}(G')+1$.  

Apply the trimming process to $G'$ to obtain $\tilde{G'}$, then $s_{\p}(\tilde{G'})=s_{\p}(G')$ and $\tilde{G'}$ is of type $A_4$. 

Assume $c(\tilde{G'})=a, \displaystyle{\tilde{G'}=\bigsqcup_{i=1}^a G_i}$ and $G_i=(V_i, E_i)$. Since $G$ is $3$-edge-connected, for each $G_i$, there must be at least $3$ vertices $v$ in $V_i$, such that $\lambda_{G}(v)=4$ and $\lambda_{G_i}(v)=3$. So 
\[\sum_{i=1}^{a}n_{4}(G_i)\leq n_{4}(G)-3a.\]
Since $\tilde{G'}$ is of type $A_4$ and of less vertices than $G$, by the induction hypothesis we have 
\[s_{\p}(\tilde{G'})\leq \dfrac{n_{4}(\tilde{G'})}{3}+c(\tilde{G'}).\]
So 
\[
\begin{array}{cl}
s_{\p}(G)&\leq s_{\p}(G')+1= s_{\p}(\tilde{G'})+1\\[0.5ex]
&\leq \dfrac{n_{4}(\tilde{G'})}{3}+c(\tilde{G'})+1\\[1.2ex]
&=\dfrac{1}{3}\ds{\sum_{i=1}^a n_{4}(G_i)} + a+1\\
&\leq \dfrac{1}{3}(n_{4}(G)-3a)+a+1\\[1.2ex]
&= \dfrac{n_{4}(G)}{3}+1=\dfrac{n_{4}(G)}{3}+c(G).
\end{array}
\]
This completes the induction step.
\end{proof}

\begin{proof}[Proof of Theorem~\ref{thm:connected general}]
Pick one edge $e_{ij}\in E$ and form a new graph $G'=(V, E\ba\{e_{ij}\})$. By Corollary~\ref{corollary:delete} we have $s_{\p}(G)\leq s_{\p}(G')+1$. The graph $G'$ is of type $A_4$, so by Proposition~\ref{prop:s for A_4}, we have $s_{\p}(G')\leq \dfrac{|V|-2}{3}+1$, hence $s_{\p}(G)\leq \dfrac{|V|+4}{3}$. Therefore 
\[r(G(\p))=|E|-s_{\p}(G)\geq 2|V|-\dfrac{|V|+4}{3}=\dfrac{5|V|-4}{3}.\] 
\end{proof}

\section{\bf Proof of Theorem~\ref{thm:generic} and Theorem~\ref{thm:general}}\label{sec:genericAndGeneral}
\begin{proof}[Proof of Theorem~\ref{thm:general}]
If $G$ is the empty graph, the theorem holds trivially. Now we may assume $|V|\geq 5$.
 If $G$ is $4$-edge-connected, then by Theorem~\ref{thm:connected general}, we have \[r(G(\p))\geq \dfrac{5|V|-4}{3}\geq \dfrac{8|V|}{5}-1\] because $|V|\geq 5$.
 
 If $G$ is not $4$-edge-connected,  then there exists $F\subseteq E$ with $|F|\leq 3$, such that the graph $G'=(V, E\ba F)$ is  disconnected. We may assume $F$ has the least cardinality among the subsets of $E$ with this property. We observe $|F|$ cannot be $3$ or $1$, because the sum of degrees of the vertices of one connected component of $G'$ has to be an even number. So $|F|=2$.
Now apply the trimming process, which was defined in Definition~\ref{def:trimming}, to $G$, we get $\tilde{G}$.  Then $\tilde{G}$ is a proper subgraph of $G$ and is of type $A_4$. 

We put the connected components of $\tilde{G}$ into two categories: the type of four vertices and the type of at least five vertices. Assume $\tilde{G}$ decomposes into disjoint union of connected components
\[\tilde{G}=(\bigsqcup_{i=1}^{a} G_i)\bigsqcup (\bigsqcup_{i=1}^{b}H_i),\]
where each $G_i$ is of four vertices, hence has to be the complete graph on four vertices, each $H_{i}$ is of at least five vertices and of type $A_4$.  Assume $H_i$ has $m_i$ vertices, then $n_{4}(H_i)\leq m_{i}-2$, so by Proposition~\ref{prop:s for A_4}, we have
\[s_{\p}(H_{i})\leq \dfrac{m_i-2}{3}+1.\]
So 
\begin{equation*}
\begin{array}{cl}
s_{\p}(G)&=\ds{s_{\p}(\tilde{G})=\sum_{i=1}^{a}s_{\p}(G_i) +\sum_{i=1}^{b}s_{\p}(H_i)}\\
&\leq a + \ds{\sum_{i=1}^{b}(\dfrac{m_i-2}{3}+1)}\\
&\ds{= a+ \dfrac{b}{3}+\dfrac{1}{3}\sum_{i=1}^{b}m_i }\\
&\leq  a+\dfrac{b}{3} + \dfrac{1}{3}(|V|-4a)\\
&=\dfrac{1}{3}|V|+\dfrac{b}{3}-\dfrac{a}{3},
\end{array}
\end{equation*} 
and because 
\begin{equation*}
\begin{array}{cl}
&(\dfrac{1}{3}|V|+\dfrac{b}{3}-\dfrac{a}{3})-(\dfrac{2}{5}|V|+1)\\
=&-\dfrac{1}{15}|V|+\dfrac{b}{3}-\dfrac{a}{3}-1\\[1.5ex]
\leq &-\dfrac{1}{15}(|V|-5b)\leq 0
\end{array}
\end{equation*}
we immediately see that $s_{\p}(G)\leq \dfrac{2}{5}|V|+1$, so \[r(G(\p))\geq 2|V|-(\dfrac{2}{5}|V|+1)=\dfrac{8}{5}|V|-1.\]
\end{proof}

\begin{proof}[Proof of Theorem~\ref{thm:generic}]
This follows trivially from Theorem~\ref{thm:general} because  we always have $r(G)\geq r(G(\p))$.
\end{proof}

\section{\bf Proof of Theorem~\ref{thm:connected generic}}\label{sec:genericConnected}
The proof uses a similar idea with that in the proof of Theorem~\ref{thm:connected general}. First we recall two results about generic rigidity from Section 2 of \cite{Luo:Rigidity}.
\begin{lemma}\label{lemma:disconnect3}
Given a graph $G=(V, E)$, assume $F\subseteq E$ with $|F|=3$ and the edges in $F$ do not share a common vertex. let $G'=(V, E\ba F)$. If removing $F$ increases the number of connected components of $G$, but removing any proper subset of $F$ will not do so, then $s(G')=s(G)$. 
\end{lemma}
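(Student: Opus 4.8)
\textbf{Proof proposal for Lemma~\ref{lemma:disconnect3}.}

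The plan is to generalize the argument used for Lemma~\ref{lemma:disconnect1} and Lemma~\ref{lemma:disconnect2}, adapting it to the case of three independent (non-concurrent) edges whose simultaneous removal disconnects $G$ while no smaller subset does. Since each of the prior lemmas is about how the stress count $s(G)=|E|-r(G)$ behaves under edge deletion, I expect the right framework to be the matroid-theoretic interpretation: $r(G)$ is the rank of the generic rigidity matroid, and $s(G)$ is the corank (number of independent stresses, i.e.\ the dimension of the left null space of the rigidity matrix at a generic configuration). Deleting an edge $e$ either decreases the rank by $1$ (if $e$ is independent of the rest, equivalently $e$ lies in no circuit within $E$) or leaves the rank unchanged while dropping $s$ by $1$ (if $e$ is spanned by $E\ba\{e\}$). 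So the whole question reduces to determining, for each of the three edges in $F$, whether it is a coloop (independent) or not, relative to the current edge set.

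The key structural observation I would exploit is that the three edges of $F$ form a minimal edge cut: removing all of $F$ increases the number of connected components, but removing any proper subset does not. First I would argue that this minimality forces all three edges to be coloops in the rigidity matroid of $G$, i.e.\ that each $e\in F$ is independent from $E\ba\{e\}$. The intuition is that a minimal edge cut separates $V$ into two parts $V_1, V_2$ with exactly the three edges of $F$ running between them; because the two sides are otherwise disconnected from each other, a stress (an element of the left null space) must vanish on any edge of $F$, since there is no ``return path'' through the rigidity matrix to balance the stress across the cut. Concretely, I would show that the rows of the rigidity matrix indexed by $F$ are linearly independent from the remaining rows, so that deleting $F$ one edge at a time decreases the rank by exactly $3$ and leaves $s$ unchanged at each step, giving $s(G')=s(G)$.

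The main obstacle will be making the ``no return path'' argument rigorous for \emph{three} edges rather than one or two, and this is precisely where the hypothesis that the edges of $F$ do not share a common vertex becomes essential. With one or two cut edges the independence is nearly automatic, but with three edges one must rule out the possibility that the three rows of $R(\p)$ indexed by $F$ satisfy a nontrivial linear relation together with rows outside $F$. I would handle this by looking at the partition $V=V_1\sqcup V_2$ induced by the cut and analyzing the rigidity matrix in block form, where the three cut edges contribute rows supported on both blocks. The condition that no proper subset of $F$ is a cut guarantees that each side remains connected through the other two edges once any single edge is removed, and the non-concurrency condition ensures the three geometric edge vectors impose three genuinely distinct constraints so that no stress can be supported purely on $F$. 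A clean way to finish is to invoke the matroid union / submodularity structure already established in \cite{Luo:Rigidity}, reducing the three-edge case to successive applications of the one- and two-edge lemmas after first deleting a carefully chosen edge of $F$, so that at each stage the hypotheses of Lemma~\ref{lemma:disconnect1} or Lemma~\ref{lemma:disconnect2} apply.
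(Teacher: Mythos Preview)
The paper's own proof is a one-line citation to Lemma~2.7 of \cite{Luo:Rigidity}, so there is not much to compare at the level of text; the substantive content of that lemma is the direct stress-balance argument across a $3$-edge cut. Your middle paragraph has the right intuition for that argument, but the concrete ``clean way to finish'' you propose at the end has a genuine gap.

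Suppose you delete one edge $e_1\in F$ first. In $G\setminus e_1$ the remaining two edges $e_2,e_3$ do form a minimal $2$-edge cut, so Lemma~\ref{lemma:disconnect2} gives $s(G\setminus e_1)=s(G')$. But you still need $s(G)=s(G\setminus e_1)$, and neither of the earlier lemmas applies to that first deletion: $e_1$ is \emph{not} a bridge (because $F$ is a \emph{minimal} $3$-cut, removing any single edge of $F$ leaves $G$ connected), so Lemma~\ref{lemma:disconnect1} is unavailable, and Lemma~\ref{lemma:disconnect2} concerns two edges, not one. No ``careful choice'' of the first edge fixes this. So the reduction to the one- and two-edge lemmas does not close.

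What actually works is the direct argument you gesture at but do not carry out. Let $F$ separate $V$ into $V_1\sqcup V_2$, with $e_i=u_iw_i$, $u_i\in V_1$, $w_i\in V_2$. Summing the equilibrium equation $\sum_{e\ni v}\omega_e(\p(v)-\p(w))=0$ over all $v\in V_1$, the contributions of edges inside $V_1$ cancel in pairs, leaving
\[
\sum_{i=1}^{3}\omega_{e_i}\bigl(\p(u_i)-\p(w_i)\bigr)=0,
\]
together with the corresponding moment equation obtained by taking the cross product with $\p(v)$ before summing. This is a homogeneous $3\times 3$ linear system in $(\omega_{e_1},\omega_{e_2},\omega_{e_3})$. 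Its determinant is a nonzero polynomial in the coordinates of $\p$ precisely when the three edges of $F$ do \emph{not} all share a common vertex (if they did, the three edge-lines would be forced through a common point for every $\p$, making the system singular identically). Hence for generic $\p$ the system forces $\omega|_F=0$, so every stress of $G$ restricts to a stress of $G'$ and conversely, giving $s(G)=s(G')$. Note also that the needed conclusion is that every stress has zero restriction to $F$, which is stronger than your phrasing ``no stress can be supported purely on $F$.''
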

\begin{proof}
This follows immediately from Lemma 2.7 in \cite{Luo:Rigidity}.
\end{proof}

\begin{lemma}\label{lemma:one extension}
Given $G=(V, E)$, assume there is a vertex $v_t$ of degree $3$ and the three vertices adjacent to $v_t$ are $v_i, v_j$ and $v_k$. Assume $e_{ij}\notin E$. We define a new graph $G'=(V', E')$ by $V'= V\ba \{v_t\}$ and $E'=(E\cup\{e_{ij}\})\ba E_{v_t}$. Then $s(G)\leq s(G')$.
\end{lemma}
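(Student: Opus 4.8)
The plan is to prove the equivalent statement $r(G)\geq r(G')+2$. Since $e_{ij}\notin E$ while $e_{ti},e_{tj},e_{tk}\in E$ are three distinct edges, removing $E_{v_t}$ and adjoining $e_{ij}$ gives $|E|=|E'|+2$, so by $s=|E|-r$ the claimed inequality $s(G)\le s(G')$ is the same as $r(G)\ge r(G')+2$. (The passage from $G'$ to $G$ is the inverse of a Henneberg $1$-extension.)

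It is worth recording why the naive approach fails, since this dictates the right one. One would like to push a self-stress $\omega$ of $G$ to a self-stress of $G'$ by keeping its values on the edges common to $G$ and $G'$ and assigning a suitable value on $e_{ij}$. Equilibrium at $v_t$ forces $(\omega_{ti},\omega_{tj},\omega_{tk})$ to be a scalar multiple of the barycentric coordinates of $\p(v_t)$ with respect to the triangle $\p(v_i)\p(v_j)\p(v_k)$; after deleting $v_t$ the residual force this leaves at $v_k$ cannot be cancelled by the single new bar $e_{ij}$, which only touches $v_i$ and $v_j$. So no such termwise transfer exists in general.

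Instead I would use the fact that the generic rank is the \emph{maximum}, $r(G)=\max_{\p}\operatorname{rank}R(\p)$, so it suffices to produce one realization of $G$ whose rigidity matrix has rank $r(G')+2$. Let $G''$ denote $G$ with $v_t$ and the edges $E_{v_t}$ deleted, so that $G'$ is $G''$ together with the extra bar $e_{ij}$. Choose a configuration $\p^*$ that is generic on $V\ba\{v_t\}$ and places $\p^*(v_t)$ on the line through $\p^*(v_i)$ and $\p^*(v_j)$, distinct from both. In the two columns of $R(\p^*)$ indexed by $v_t$ only the rows $e_{ti},e_{tj},e_{tk}$ are nonzero, and these three row-vectors still have rank $2$: the first two are parallel (both directed along line $v_iv_j$) while the third is transverse, since $\p^*(v_k)$ is off that line. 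Pivoting on the rows $e_{tk}$ and $e_{ti}$ clears the $v_t$-columns and yields $\operatorname{rank}R_{\p^*}(G)=2+\operatorname{rank}M$, where $M$ is obtained from $R_{\p^*}(G'')$ by adjoining the single residual row $C$ coming from the linear dependence among the $v_t$-columns.

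The hard part is controlling $C$, and this is exactly where the special position pays off. Because $\p^*(v_t)$ lies on line $v_iv_j$, the dependence coefficient on the $e_{tk}$-row vanishes, so $C$ has zero entries in the block of $v_k$ and is a nonzero scalar multiple of the rigidity row of the bar $e_{ij}$. Hence $M$ and $R_{\p^*}(G')$ have the same row space, giving $\operatorname{rank}M=\operatorname{rank}R_{\p^*}(G')=r(G')$, the last equality because $\p^*$ is generic on $V\ba\{v_t\}$. Combining, $\operatorname{rank}R_{\p^*}(G)=r(G')+2$, and since $r(G)\ge\operatorname{rank}R_{\p^*}(G)$ we are done. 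At a generic position of $v_t$ the row $C$ would instead be a genuine combination of all three bars $v_iv_j$, $v_iv_k$, $v_jv_k$ with nonzero coefficients, and would not visibly lie in the span of $R(G'')$; degenerating $v_t$ onto the line $v_iv_j$ is the device that collapses $C$ into a multiple of the $e_{ij}$-row, making the rank comparison exact while only possibly \emph{lowering} $\operatorname{rank}R(G)$, which is all the inequality requires.
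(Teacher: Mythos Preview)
Your argument is correct. The paper does not actually prove this lemma: it simply cites Proposition~2.12 of \cite{Luo:Rigidity}. What you have written is essentially the classical proof that a Henneberg $1$-extension preserves (generically) the rank up to the obligatory $+2$: specialize $\p(v_t)$ onto the line through $\p(v_i)$ and $\p(v_j)$, observe that the $v_t$-columns still have rank $2$, and check that the unique residual row after eliminating those columns is a nonzero multiple of the rigidity row for $e_{ij}$. Your computation of that residual row is right; writing $\p^*(v_t)=(1-s)\p^*(v_i)+s\p^*(v_j)$ with $s\neq 0,1$, the coefficient on the $e_{tk}$-row in the dependence is indeed $0$ and the leftover row equals $(1-s)$ times the $e_{ij}$-row. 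The concluding inequality $r(G)\ge\operatorname{rank}R_{\p^*}(G)$ uses only upper-semicontinuity of rank (equivalently, that the generic rank is the maximum), which is legitimate here.

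One small remark: in the pivoting step you appeal to the block decomposition $\operatorname{rank}R_{\p^*}(G)=2+\operatorname{rank}M$. This is just the Schur-complement identity for a block matrix with invertible pivot block, and it would not hurt to say so explicitly; as written a reader might momentarily wonder why the entries of rows $e_{ti},e_{tk}$ in the non-$v_t$ columns do not interfere. But the step is standard and the argument is sound.
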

\begin{proof}
This is Proposition 2.12 in \cite{Luo:Rigidity}.
\end{proof}

\begin{definition}\label{def:generic trim}
A graph $G=(V, E)$ is called {\it generically trimmed} if 
\begin{itemize}
\item it is trimmed (see Definition~\ref{def:trim});
\item if there exists $F\subseteq E$, such that $|F|=3$ and removing $F$ increases the number of connected components of $G$, then all the edges in $F$ share a common vertex.
\end{itemize}
\end{definition}

Similar to the trimming process, we may define the {\it generic trimming process}.
For any graph $G=(V, E)$, we can obtain a generically trimmed subgraph $\tilde{G}^g$ of $G$ in the following way.  If $G$ is generically trimmed, then let $\tilde{G}^g=G$. If $G$ is not generically trimmed, then we first obtain a subgraph $G_1$ of $G$ in one of the following three ways:
\begin{itemize}
\item[(1)] If there is vertex $v_i$ of $G$ of degree $1$ or $2$, let $G_1=(V\ba\{v_i\}, E\ba E_{v_i})$.  The choice of $v_i$ may not be unique;
\item[(2)] if every vertex is of degree at least $3$, and there exists $F \subseteq E$, such that $|F|\leq 2$, and removing $F$ would increase the number of connected components of $G$, but removing any proper subset of $F$ would not increase the number of connected components of $G$, then let $G_1=(V, E\ba F)$. The choice of $F$ may also not be unique.
\item[(3)] if $G$ is trimmed, and there exists $F\subseteq E$ with $|F|=3$, such that the vertices in $F$ do not share a common vertex and removing $F$ will increase the number of connected components of $G$, then let $G_1=(V, E\ba F)$. Again, the choice of $F$ may not be unique. 
\end{itemize} 
If $G_1$ is generically trimmed, then we let $\tilde{G}^g=G_1$. Otherwise, we obtain $G_2$ as a subgraph of $G_1$ in the same way as described above. Following this process we get a sequence of graphs $G=G_0, G_1, G_2, ..., G_q$, where $G_{k+1}$ is a subgraph of $G_k$ obtained in one of three ways we just described, and $G_q$, which could be empty, is generically trimmed. We let $\tilde{G}^g$ be $G_q$.

\begin{definition}\label{def:generic trimming}
We call the process of getting $\tilde{G}^g$ from $G$ described above the {\it generic trimming process}.   It follows from Lemma~\ref{lemma:delete}, Lemma~\ref{lemma:disconnect1}, Lemma~\ref{lemma:disconnect2} and Lemma~\ref{lemma:disconnect3} that $s(\tilde{G}^g)=s(G)$.
\end{definition}

\begin{definition}
We call a graph $G=(V, E)$ is {\it of type $B_{4}$} if 
\begin{itemize}
\item each vertex of $G$ is of degree $3$ or $4$;
\item each connected component of $G$ has at least {\it four} vertices of degree $3$;
\item it is generically trimmed.
\end{itemize}
\end{definition}

\begin{lemma}\label{lemma:K_4 and B_4}
Given $G=(V, E)$ is a connected graph of type $B_4$, assume $v_t\in V$ is of degree $3$ and the three vertices adjacent to $v_t$ are $v_{i_1}, v_{i_2}, v_{i_3}$. Denote by $K_4=(V', E')$ the complete graph on $v_t, v_{i_1}, v_{i_2}$ and $v_{i_3}$. If $K_4$ is a subgraph of $G$, then $G=K_4$.
\end{lemma}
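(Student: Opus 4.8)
The plan is to argue by contradiction: assume $K_4\subseteq G$ but $G\neq K_4$, and extract a contradiction from the three defining properties of type $B_4$. Write $V'=\{v_t, v_{i_1}, v_{i_2}, v_{i_3}\}$ for the vertex set of the given copy of $K_4$. Since $G$ is connected and $G\neq K_4$, the set $V\setminus V'$ is nonempty, so at least one edge must cross the cut separating $V'$ from $V\setminus V'$. The strategy is to pin down the exact size of this cut, show it must be $3$, and then use generic trimming together with the degree-$3$ count to rule that case out as well.

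First I would compute the cut size. The vertex $v_t$ has degree $3$ in $G$, and all three of its edges go to $v_{i_1}, v_{i_2}, v_{i_3}$ inside the $K_4$, so no edge leaves $V'$ through $v_t$. Each of $v_{i_1}, v_{i_2}, v_{i_3}$ already spends three of its incident edges inside the $K_4$ (to $v_t$ and to the other two), hence contributes exactly one edge to the cut when it has degree $4$ and none when it has degree $3$. Thus the number of edges leaving $V'$ equals the number of degree-$4$ vertices among $v_{i_1}, v_{i_2}, v_{i_3}$, a number in $\{1,2,3\}$ by connectivity. Because $G$ is trimmed and connected it is $3$-edge-connected, so it admits no edge cut of size $1$ or $2$; this forces the cut to have size exactly $3$, and all of $v_{i_1}, v_{i_2}, v_{i_3}$ to have degree $4$.

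The remaining case is the crux, and here the generic trimming hypothesis enters. The three cut edges form a set $F$ with $|F|=3$ whose removal disconnects $G$; since $G$ is generically trimmed, the edges of $F$ must share a common vertex. As the three edges emanate from the distinct vertices $v_{i_1}, v_{i_2}, v_{i_3}$, this common vertex cannot lie in $V'$, so it is a single vertex $w\in V\setminus V'$ and $F=\{e_{i_1 w}, e_{i_2 w}, e_{i_3 w}\}$. I would finish by examining $V''=V'\cup\{w\}$ and splitting on $\lambda(w)$. If $\lambda(w)=3$, then no edge leaves $V''$, so by connectivity $G$ is the five-vertex graph on $V''$, which has only the two degree-$3$ vertices $v_t$ and $w$ — contradicting the requirement that every component of a type-$B_4$ graph have at least four vertices of degree $3$. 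If $\lambda(w)=4$, then $w$ has one further edge, which by simplicity leaves $V''$, giving a cut of size $1$ and again contradicting $3$-edge-connectivity. Either way we reach a contradiction, so $G=K_4$.

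The main obstacle I anticipate is precisely the size-$3$ cut case. The degree counting and $3$-edge-connectivity dispose of the smaller cuts automatically, but eliminating a genuine $3$-edge cut requires combining the generic trimming condition (to collapse the three cut edges onto a single external vertex $w$) with the ``four degree-$3$ vertices'' clause of type $B_4$ (to exclude the resulting $K_5$-minus-an-edge configuration). Both hypotheses are essential; the small points needing care are verifying that the shared vertex is forced outside $V'$ and that simplicity forbids a multi-edge at $w$.
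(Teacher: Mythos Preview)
Your proof is correct and follows essentially the same route as the paper's: both argue by contradiction, identify the cut $F$ between $V'$ and its complement, use $3$-edge-connectivity to force $|F|=3$, invoke generic trimming to collapse the three cut edges onto a single external vertex, and then eliminate the two possible degrees of that vertex via the degree-$3$ count and edge-connectivity respectively. Your write-up is somewhat more explicit about the cut-size computation and about why the shared vertex must lie outside $V'$, but the argument is structurally identical to the paper's.
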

\begin{proof}
If $G\neq K_4$, then $F=\{e\in E\ba E'\big| e \mbox{\ is incident to\ }v_{i_1}, v_{i_2} \mbox{\ or\ } v_{i_3}\}$ is not empty. Deleting $F$ from $G$ would disconnect $G$. Because $G$ is generically trimmed, we must have $|F|=3$ and there exists $v_{x}\in V\ba V'$, such that $\{e_{i_1x}, e_{i_2x}, e_{i_3x}\}\subseteq E$. 
Because $G$ is of type $B_4$, either $\lambda(v_x)=3$ or $\lambda(v_x)=4$. If $\lambda(v_x)=3$, then $G$ only have five vertices: $v_{t}, v_{i_1}, v_{i_2}, v_{i_3}, v_x$, and only $2$ of them are of degree $3$, this contradicts the assumption that $G$ is of type $B_4$.  If $\lambda(v_x)=4$, assume $v_y\in V\ba V'$ is the other vertex adjacent to $v_x$. Then deleting $e_{xy}$ will disconnect $G$, contradicting the assumption that $G$ is generically trimmed. 
So $G=K_4$.  
\end{proof}

\begin{proposition}\label{prop:s for B_4}
If $G=(V, E)$ is of type $B_4$, then 
\begin{equation}\label{eq:s for B_4}
s(G)\leq \dfrac{n_4(G)}{4}+c(G),
\end{equation}
where $n_4(G)$ is the number of vertices of degree $4$ in $G$, $c(G)$ is the number of connected components of $G$. 
\end{proposition}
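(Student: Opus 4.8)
The plan is to run the same induction on $|V|$ that proves Proposition~\ref{prop:s for A_4}, but with two upgrades suited to the generic (rather than general-position) setting. Where the $A_4$ argument deletes a degree-$3$ vertex and pays a $+1$ via Corollary~\ref{corollary:delete}, here I would instead perform the generic one-extension of Lemma~\ref{lemma:one extension}, which costs nothing: $s(G)\le s(G')$. And where that argument uses the trimming process, here I would use the generic trimming process of Definition~\ref{def:generic trimming}, which also preserves $s$. The empty graph is the base case; if $c(G)>1$ then each connected component is again of type $B_4$ with fewer vertices, and summing \eqref{eq:s for B_4} over the components reduces everything to the connected case.

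So assume $G$ is connected. If $G=K_4$ then $s(K_4)=6-5=1=\frac{n_4(G)}{4}+c(G)$, the genuine base case. Otherwise pick a vertex $v_t$ of degree $3$ (which exists, $G$ being of type $B_4$) with neighbors $v_i,v_j,v_k$. By Lemma~\ref{lemma:K_4 and B_4} the complete graph on these four vertices is not a subgraph of $G$, so some edge among the neighbors is absent, say $e_{ij}\notin E$; thus Lemma~\ref{lemma:one extension} applies and gives $s(G)\le s(G')$ for $G'=(V\ba\{v_t\},(E\cup\{e_{ij}\})\ba E_{v_t})$. Generic trimming then produces $\tilde{G'}^g=\bigsqcup_{l=1}^{a}G_l$, with $s(\tilde{G'}^g)=s(G')$, every degree equal to $3$ or $4$, and every $G_l$ generically trimmed. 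Once each $G_l$ is known to be of type $B_4$, the induction hypothesis yields $s(G)\le s(\tilde{G'}^g)\le \frac14\sum_l n_4(G_l)+a$.

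The decisive ingredient is a boundary count. Passing from $G$ to a component $G_l$ only lowers degrees, and the maximum degree is $4$, so every surviving vertex loses at most one incident edge and (since generic trimming only ever deletes edges across a cut) no edge internal to $V_l$ is deleted; writing $\partial_G(V_l)$ for the number of edges of $G$ leaving $V_l$ and $t_l$ for the number of degree-$3$ (in $G$) vertices inside $V_l$, a direct count gives that $G_l$ has exactly $t_l+\partial_G(V_l)-2\varepsilon_l$ vertices of degree $3$, of which $\partial_G(V_l)-2\varepsilon_l$ dropped from degree $4$ to degree $3$, where $\varepsilon_l=1$ if $\{v_i,v_j\}\subseteq V_l$ (the component receiving the new edge) and $\varepsilon_l=0$ otherwise. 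Hence $n_4(\tilde{G'}^g)\le n_4(G)-\sum_l(\partial_G(V_l)-2\varepsilon_l)$. Using that $G$ is generically trimmed I would show $\partial_G(V_l)\ge 4$ for every component with $\varepsilon_l=0$: a boundary of size $3$ forces its three edges to share a common vertex $w$ (Definition~\ref{def:generic trim}), and examining the cases $w\in V_l$ and $w\notin V_l$ (together with $\lambda(w)\in\{3,4\}$, the fact that each $|V_l|\ge 4$, and $3$-edge-connectivity of $G$) shows the only survivor is $w=v_t$, which forces $\{v_i,v_j,v_k\}\subseteq V_l$ and hence $\varepsilon_l=1$. Since the unique component with $\varepsilon_l=1$ still has $\partial_G(V_l)\ge 3$, this yields $\sum_l(\partial_G(V_l)-2\varepsilon_l)\ge 4(a-1)$, and substituting into the induction hypothesis collapses the estimate to $s(G)\le \frac{n_4(G)}{4}+1$.

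The step I expect to be the real obstacle is showing that the component receiving the new edge $e_{ij}$ is itself of type $B_4$, i.e. that it retains at least four vertices of degree $3$. This cannot be waived: the ``four degree-$3$ vertices per component'' hypothesis is essential, since $K_5$ with one edge removed is connected and generically trimmed with only two degree-$3$ vertices, yet has $s=2>\frac34+1$, so the induction would be \emph{false} for components with fewer degree-$3$ vertices. The count above gives this component $t_1+\partial_G(V_1)-2$ vertices of degree $3$, and since $v_i,v_j$ keep their $G$-degrees (each loses its edge to $v_t$ but gains $e_{ij}$) their degree-$3$ incarnations persist, while $v_k$ contributes a fresh degree-$3$ vertex whenever $\lambda(v_k)=4$. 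I would leverage this, together with the hypothesis that $G$ itself has at least four degree-$3$ vertices and a careful choice of $v_t$ and of the missing edge, to force $t_1+\partial_G(V_1)\ge 6$ and thereby keep this last component of type $B_4$; making that choice go through in every configuration is the crux of the proof.
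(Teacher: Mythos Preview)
Your overall architecture is right, and you have correctly located the genuine obstacle: after the one-extension and generic trimming, the component $G_1$ that receives the new edge $e_{ij}$ must itself be of type $B_4$, and your boundary count only guarantees $t_1+\partial_G(V_1)-2$ degree-$3$ vertices there. You propose to force $t_1+\partial_G(V_1)\ge 6$ by ``a careful choice of $v_t$ and of the missing edge'', but you never carry this out, and there is no evident reason it can always be arranged: the $\ge 3$ surviving degree-$3$ vertices of $G$ need not land in the component containing $v_i,v_j$, and $\partial_G(V_1)$ can be exactly $3$ (precisely the three edges from $v_t$). So as written the induction does not close.

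The paper sidesteps this with a device you are missing: a case split on whether $G_1:=(V\setminus\{v_t\},E\setminus E_{v_t})$ (the plain deletion, \emph{without} the extra edge) is already of type $B_4$. If it is, then one checks directly that the one-extension $G_2=G_1\cup\{e_{ij}\}$ is again connected and of type $B_4$ (here $v_{i_1},v_{i_2},v_{i_3}$ are forced to have degree $4$ in $G$, so the $\ge 3$ other degree-$3$ vertices of $G$ together with $v_{i_3}$ give $\ge 4$ degree-$3$ vertices in $G_2$), and the induction hypothesis applies with no trimming at all. If $G_1$ is \emph{not} of type $B_4$, the paper abandons the one-extension entirely: it simply pays the $+1$ from Corollary~\ref{corollary:delete}, generically trims $G_1$, and proves the stronger claim that each resulting component has at least four vertices whose degree dropped from $4$ in $G$ to $3$ (not merely four degree-$3$ vertices). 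The point is that if only three dropped, the three boundary edges would share a common vertex by Definition~\ref{def:generic trim}, that vertex must be $v_t$, and then the component is all of $G_1$---contradicting that $G_1$ fails to be of type $B_4$. This gives $n_4(\tilde{G_1}^g)\le n_4(G)-4a$, which exactly absorbs the $+1$. Your attempt to always route through the one-extension is what creates the difficulty you flagged; the paper's dichotomy is precisely the idea that dissolves it.
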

\begin{proof}
We are going to use induction on $|V|$. The base case is $|V|=0$, \eqref{eq:s for B_4} holds trivially in this case. Now assume $|V|=m>0$ and \eqref{eq:s for B_4} holds for graphs with less than $m$ vertices. If $c(G)>1$, then each connected component of $G$ is of type $B_4$ and of less vertices, so \eqref{eq:s for B_4} holds for each connected component by the induction hypothesis, we may then add them to show \eqref{eq:s for B_4} holds for $G$ as well.  Now we assume $G$ is connected. 

Since $G$ is of type $B_4$, there exists a vertex $v_t$ of degree $3$. Let $G_1=(V_1, E_1)=(V\ba\{v_t\}, E\ba E_{v_t})$. Then by Corollary~\ref{corollary:delete}, we have $s(G)\leq s(G_1)+1$. 

Case 1: $G_1$ is of type $B_4$. Assume the three vertices adjacent to $v_t$ are $v_{i_1}, v_{i_2}, v_{i_3}$.
\begin{itemize}
\item[(a)] If $\{e_{i_1i_2}, e_{i_1i_3}, e_{i_2i_3}\}\subseteq E$, then by Lemma~\ref{lemma:K_4 and B_4} we have $G=K_4$ and we can check \eqref{eq:s for B_4} holds in this case.  
\item[(b)] If $\{e_{i_1i_2}, e_{i_1i_3}, e_{i_2i_3}\}\not\subseteq E$, then without loss of generality we may assume $e_{i_1i_2}\notin E$. Define $G_2=(V_2, E_2)$ by $V_2= V\ba \{v_t\}$ and $E_2=(E\cup\{e_{i_1i_2}\})\ba E_{v_t}$. Then by Lemma~\ref{lemma:one extension}, we have $s(G)\leq s(G_2)$. Because both $G$ and $G_1$ are of type $B_4$, $G_2$ must be of type $B_4$ as well. Since $|V_2|<|V|$, by the induction hypothesis, we have $s(G_2)\leq \dfrac{n_4(G_2)}{4}+1$. So $s(G)\leq s(G_2)\leq \dfrac{n_4(G)}{4}+1$.
\end{itemize}

Case 2: $G_1$ is not of type $B_4$. 

In this case we apply the generic trimming process to $G_1$ to obtain $\tilde{G_1}^g$, then $s(G_1)=s(\tilde{G_1}^g)$. Assume $c(\tilde{G_1}^g)=a$ and $\ds{\tilde{G_1}^g=\bigsqcup_{i=1}^{a}H_i.}$ 

We claim in each $H_i$, there are at least $4$ vertices $v$ such that $\lambda_{H_i}(v)=3$ and $\lambda_{G}(v)=4$.  If not, because $G$ is $3$-edge-connected, there exist exactly three vertices $v_{j_1}, v_{j_2}, v_{j_3}$ such that $\lambda_{H_{i}}(v_{j_k})=3, \lambda_{G}(v_{j_k})=4$ for $1\leq k\leq 3$. Assume the edges in $G$ but not in $\tilde{G_1}^g$ and incident to some $v_{j_k}$ are $e_{xj_1}, e_{yj_2}$ and $e_{zj_3}$. Because $G$ is of type $B_4$, hence generically trimmed, we must have $x=y=z$. So if $v_x\in V_1$, then $v_x$ is a vertex of $\tilde{G_1}^g$, but this is not the case, so $v_x\notin V_1$, hence $v_x=v_t$.  Then $H_i=G_1$, this contradicts the assumption that $G_1$ is not of type $B_4$.  So we have proved the claim. 

So each $H_i$, hence $\tilde{G_1}^g$, is of type $B_4$, by the induction hypothesis, we have \[s(\tilde{G_1}^g)\leq \dfrac{n_4(\tilde{G_1}^g)}{4}+c(\tilde{G_1}^g).\]  So 
\[
\begin{array}{cl}
s(G)&\leq s(G_1)+1= s(\tilde{G_1}^g)+1\\[0.5ex]
&\leq \dfrac{n_{4}(\tilde{G_1}^g)}{4}+c(\tilde{G}^g)+1\\[0.6ex]
&\leq \dfrac{1}{4}(n_{4}(G)-4a)+a+1\\[1.2ex]
&= \dfrac{n_{4}(G)}{4}+1=\dfrac{n_{4}(G)}{4}+c(G).
\end{array}
\]
This completes the induction steps. 
\end{proof}

\begin{proof}[Proof of Theorem~\ref{thm:connected generic}]
Define $G_1=(V_1, E_1)$ by $V_1=V\ba \{v_1\}$ and $E_1=E\ba E_{v_1}$. Then by Corollary~\ref{corollary:delete}, $s(G)\leq s(G_1)+2$. Apply the generic trimming process to $G_1$, we obtain $\tilde{G_1}^g$, then $s(G_1)=s(\tilde{G_1}^g)$. Assume $c(\tilde{G_1}^g)=a$ and $\ds{\tilde{G_1}^g=\bigsqcup_{i=1}^{a}H_i}.$ For each $H_i$, there must exist at least $4$ vertices of degree $3$, because $G$ is $4$-edge-connected. So each $H_i$, hence $\tilde{G_1}^g$, is of type $B_4$. By Proposition~\ref{prop:s for B_4}, we have 
\[s(\tilde{G_1}^g)\leq \dfrac{n_4(\tilde{G_1}^g)}{4}+a.\]
So 
\[
\begin{array}{cl}
s(G)&\leq s(G_1)+2= s(\tilde{G_1}^g)+2\\[0.5ex]
&\leq \dfrac{n_4(\tilde{G_1}^g)}{4}+a+2\\
&\leq \dfrac{1}{4}(|V|-1-4a)+a+2\\[1.5ex]
&=\dfrac{|V|+7}{4}
\end{array}
\] 
Hence
\[r(G)= 2|V|-s(G)\geq \dfrac{7|V|-7}{4}.\]
\end{proof}

\end{document}